\documentclass{amsart}

\usepackage{color}
\usepackage{amsmath,amssymb,graphicx,hyperref}
\usepackage[shortalphabetic]{amsrefs}

\renewcommand\MR[1]{%
    \relax\ifhmode\unskip\spacefactor3000 \space\fi
    MR\nolinebreak[3]\hspace{.16667em}#1%
}

\renewcommand{\baselinestretch}{1.3}

\theoremstyle{plain}
\newtheorem{theorem}{Theorem}[section]
\newtheorem{lemma}[theorem]{Lemma}
\newtheorem{proposition}[theorem]{Proposition}
\newtheorem{corollary}[theorem]{Corollary}

\theoremstyle{remark}

\newtheorem{conjecture}{Conjecture}

\theoremstyle{definition}

\newcommand{\dis}{\displaystyle}

\newcommand{\Z}{\mathbb{Z}}

\newcommand{\al}{\alpha}
\newcommand{\be}{\beta}

\newcommand{\la}{\lambda}
\newcommand{\D}{\Delta}
\newcommand{\Si}{\Sigma}
\newcommand{\So}{\Sigma_0}

\newcommand{\I}{^{-1}}
\newcommand{\gen}{\langle}
\newcommand{\by}{\rangle}

\DeclareMathOperator{\SL}{SL}

\DeclareMathOperator{\GL}{GL}

\DeclareMathOperator{\Aut}{Aut}
\DeclareMathOperator{\lk}{lk}
\DeclareMathOperator{\rk}{rk}

\numberwithin{equation}{section}

\begin{document}

\title[Anisotropic groups admitting no split spherical BN-pairs]{Some reductive anisotropic groups that admit no non-trivial split spherical BN-pairs}

\author{Peter Abramenko}
\address{Department of Mathematics\\
University of Virginia\\
Charlottesville, VA 22904}
\email{pa8e@virginia.edu}

\author{Matthew C. B. Zaremsky}
\address{Department of Mathematics\\
University of Virginia\\
Charlottesville, VA 22904}
\email{mcz5r@virginia.edu}

\begin{abstract}
We prove, for any infinite field $k$, that any virtually trivial split spherical $BN$-pair in the group $G(k)$ of $k$-rational points of a reductive $k$-group $G$ is already trivial. We then inspect the case when $G$ is $k$-anisotropic and show that in many situations $G(k)$ admits no non-trivial split spherical $BN$-pairs. This improves results and contributes to a conjecture of Caprace and Marquis, which can be viewed as a converse to a well-known result of Borel and Tits.
\end{abstract}

\maketitle

\section{Introduction}\label{sec:intro}

We prove a variety of results inspired by a conjecture of Caprace and Marquis, formulated in \cite{caprace09}.

\begin{conjecture}\label{conj1}
Let $G$ be a reductive algebraic $k$-group that is anisotropic over $k$. Then every split spherical $BN$-pair of $G(k)$ is trivial \cite{caprace09}.
\end{conjecture}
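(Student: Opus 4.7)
The plan is to assume for contradiction that $(B,N)$ is a non-trivial split spherical $BN$-pair in $G(k)$, with decomposition $B = T \ltimes U$, $T = B \cap N$, $U \neq 1$, and to manufacture from this a non-trivial unipotent element of $G(k)$. This will contradict the Borel--Tits characterization that a reductive $k$-anisotropic group has no non-trivial unipotent $k$-points when $k$ is infinite. The main theorem stated in the abstract, that virtual triviality implies triviality, is reserved for the final step.

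First I would pass to the spherical building $\Delta$ associated with $(B,N)$, on which $G(k)$ acts strongly transitively with $B$ stabilizing the fundamental chamber. The split condition extracts from $U$ a family of abstract ``root subgroups'' $U_\alpha$ indexed by the positive roots of $W = N/T$, together with their opposites $U_{-\alpha}$, satisfying Bruhat-type commutation relations and unique product decompositions. The key observation to aim for is that these root subgroups must behave like honest unipotent radicals of minimal parabolics, even though their $k$-algebraic-group status is a priori unknown.

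Next I would reduce to rank one by considering $\langle U_\alpha, U_{-\alpha}\rangle$ for a simple root $\alpha$. Under the split axioms this subgroup carries a rank-one split $BN$-pair whose building is a Moufang set with root groups $U_{\pm\alpha}$. The classification of Moufang sets (Tits, De Medts--Weiss) severely constrains such configurations and should force every non-trivial element of $U_\alpha$ to be unipotent as a $k$-point of $G$. Borel--Tits then yields $U_\alpha = 1$ for every simple $\alpha$, hence $U = 1$, so $B = T \subseteq N$; this degenerate configuration falls under the virtually trivial hypothesis of the main theorem, producing the desired contradiction.

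The principal obstacle, and precisely why the conjecture is still open, is the passage from the abstract root group $U_\alpha \subseteq G(k)$ to actual unipotent $k$-points of $G$ in the algebraic-group sense. The $BN$-pair axioms only furnish abstract group-theoretic data, so without additional geometric input --- a compatible $k$-structure on $(B,N)$, a tensorial argument using faithful $k$-representations of $G$, or a Galois-descent datum matching $(B,N)$ --- there is no direct mechanism certifying that an element satisfying the abstract commutation relations of a root group must be unipotent as a matrix. The partial results of the present paper handle various special cases in which this link can be forged; a uniform treatment seems to require a new abstract criterion for unipotency in reductive groups over arbitrary fields.
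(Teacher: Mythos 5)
The statement you are trying to prove is Conjecture~\ref{conj1}, which the paper does not prove and explicitly leaves open; it only establishes special cases (when $k$ is local or perfect, when $G(k)$ consists of semisimple elements, when $G(k)$ is the multiplicative or norm-one group of a division algebra, and, modulo rank-one factors, when $G(k)$ has no non-trivial unipotent elements). Your proposal is a plan rather than a proof, and you yourself identify the fatal gap: there is no mechanism for converting an abstract root group $U_\alpha$ of a split $BN$-pair in $G(k)$ into unipotent elements of the algebraic group $G$. Beyond that acknowledged gap, two of your intermediate steps are unsound. First, the reduction to rank one relies on a ``classification of Moufang sets,'' but no such classification exists: Tits--Weiss classify Moufang \emph{polygons} (rank two), and spherical Moufang buildings of rank at least three are classified, while Moufang sets are wild and unclassified. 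This is precisely why the rank-one case is the hard one --- note that Theorem~\ref{no_unip_thm} requires irreducible rank $\geq 2$, and the paper only removes the rank-one restriction under stronger hypotheses (Theorem~\ref{all_ss_thm}, Lemma~\ref{div_alg_lemma}). Second, the ``Borel--Tits characterization'' you invoke, that an anisotropic reductive group over an infinite field has no non-trivial unipotent $k$-points, is only available when $k$ is perfect (cf.\ the discussion after Theorem~\ref{no_unip_thm} and \cite{caprace11}*{Proposition~4.1}); over imperfect fields this is not known in the generality you need, so even if unipotency of root-group elements could be certified, the contradiction would not follow for arbitrary $k$.

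For contrast, the paper's partial results deliberately avoid trying to show that root-group elements are unipotent in $G$. The actual tools are: Zariski density of $G(k)$ plus Lemma~\ref{key_lemma} to show virtually trivial implies trivial (Theorem~\ref{refine_capr_marq}); for rank $\geq 2$, the non-commuting root groups of Lemma~\ref{noncomm_roots} combined with nilpotency of $U$, Lemma~\ref{nilp_implies_vabln}, and the classification of Moufang polygons to force local finiteness of the building (Theorem~\ref{no_unip_thm}); and, for arbitrary rank under the all-semisimple hypothesis, rigidity of diagonalizable groups (Lemma~\ref{rigidity_lemma}) together with a dimension count showing a double coset of a virtually nilpotent subgroup cannot be Zariski dense (Propositions~\ref{B_not_v_nilp} and \ref{T_v_triv_implies_triv}). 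If you want to contribute to the conjecture, the realistic targets are the two criteria highlighted at the end of Section~\ref{sec:div_algs_and_conclusions}: show $U$ is virtually abelian (for rank $\geq 2$), or show $[T:C_T(U)]<\infty$ (for any rank), rather than attempting to produce unipotent $k$-points directly.
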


Here and in the following, a reductive group is always connected by definition.
A $BN$-pair $(B,N)$ of a group $G$ is called \emph{saturated} if $T:=B\cap N$ is equal to $\dis\bigcap_{w\in W}wBw\I$, and \emph{split} if it is saturated and $B$ decomposes as $B=U\rtimes T$, where $U$ is nilpotent. We say $(B,N)$ is \emph{trivial} if $B=G$, or equivalently if the building $\D=\D(G,B)$ is trivial. See \cite{abr08} for the relevant background on buildings. Conjecture~\ref{conj1} can be phrased as a converse to a well-known result of Borel and Tits \cite{bortits65}. Namely, for any reductive algebraic $k$-group $G$ that is \emph{isotropic} over $k$,
the group of $k$-rational points $G(k)$ admits a canonical (non-trivial) split spherical $BN$-pair. In their paper Caprace and Marquis focus on the following weaker conjecture.

\begin{conjecture}\label{conj2}
Let $G$ be a reductive algebraic $k$-group that is anisotropic over $k$. Then every split spherical $BN$-pair of $G(k)$ is virtually trivial \cite{caprace11}.
\end{conjecture}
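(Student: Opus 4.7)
The plan is to extract enough algebraic content from a hypothetical non-virtually-trivial split spherical $BN$-pair $(B,N)$ of $G(k)$ to contradict the anisotropy of $G$. The split decomposition $B = U \rtimes T$ gives a nilpotent subgroup $U$ that, together with its conjugates, is responsible for all of the non-triviality of the building $\D(G,B)$; if $U$ is trivial then $B = T$ and the situation degenerates quickly, so the heart of the argument is to rule out a non-trivial $U$.

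First, I would reduce to the case of an irreducible $BN$-pair of rank $r \geq 1$ by passing to the irreducible factors of the Coxeter diagram. Then, using standard $BN$-pair theory (see \cite{abr08}), I would identify the root subgroups $U_\al \leq U$ indexed by the finite root system $\Phi$ of the spherical Weyl group $W$. These subgroups satisfy Chevalley-style commutator relations, generate $U$, and the group $\gen U_\al \mid \al \in \Phi \by$ behaves formally like the $k$-points of a split semisimple $k$-group of type $\Phi$ sitting inside $G(k)$.

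Second, I would try to argue that each element of a non-trivial $U_\al$ is a unipotent element of $G$ in the algebraic sense. If this can be accomplished, then Borel--Tits \cite{bortits65} forces every non-trivial $U_\al$ into a proper parabolic $k$-subgroup of $G$, contradicting $k$-anisotropy. Hence $U = \{1\}$, so $B = T$, and a short further argument, using that the building is then built entirely from cosets of a torus in $G(k)$ together with anisotropy, should show that $T$ has finite index in $G(k)$, giving virtual triviality. The paper's main theorem would then upgrade this to outright triviality whenever $k$ is infinite, consistent with Conjecture~\ref{conj1}.

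The hardest step by far is the identification of the abstract elements of $U_\al$ with algebraic unipotent elements of $G$. Over arbitrary, possibly imperfect, fields, abstract nilpotent subgroups of $G(k)$ need not arise from unipotent $k$-subgroups of $G$, and this is precisely why Conjecture~\ref{conj2} remains open in full generality. A serious attack would likely have to restrict the field (e.g.\ to characteristic $0$, or to local or global $k$) or combine Galois-cohomological information about $G$ with the rigidity imposed on root subgroups by the commutator relations inherited from $W$; expecting a single uniform argument across all reductive anisotropic $G$ over all fields seems optimistic.
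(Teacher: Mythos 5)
There is no proof of this statement to compare against: in the paper it is precisely Conjecture~\ref{conj2}, due to Caprace and Marquis, and it is left open in general. The paper only records that it is known when $k$ is local or perfect (by \cite{caprace11}), proves that it \emph{implies} Conjecture~\ref{conj1} (Theorem~\ref{refine_capr_marq}), and establishes Conjecture~\ref{conj1} directly in special situations (no unipotents and rank $\geq 2$, all elements semisimple, division algebras). So what you have written cannot be "the same approach as the paper"; it would have to stand on its own as a new argument, and it does not.

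The gap is the one you yourself flag, and it is not a technical loose end but the entire difficulty. Your plan needs every non-trivial root group $U_{\alpha}$ to consist of unipotent elements of $G$ in the algebraic-group sense, so that Borel--Tits puts it inside a proper parabolic $k$-subgroup and contradicts anisotropy. Nothing in the abstract $BN$-pair or in the splitting $B=U\rtimes T$ gives this: $U$ is only abstractly nilpotent, and over imperfect fields an anisotropic reductive group can genuinely contain non-trivial unipotent $k$-points, while the theorem that unipotent subgroups of $G(k)$ lie in unipotent radicals of $k$-parabolics itself requires hypotheses (e.g.\ $k$ perfect) of the kind you would be trying to avoid. So even if the identification could be made, the strategy would at best recover the perfect-field case already settled in \cite{caprace11} and reproved here via Theorem~\ref{all_ss_thm}. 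Two further structural problems: the root groups, Moufang property and commutator relations you invoke only exist when the irreducible rank is at least $2$ (this is the De Medts--Haot--Tent--Van Maldeghem result \cite{demedts05} used in Section~\ref{sec:big_rank}); in rank $1$ the skeleton of your argument does not even start, and rank $1$ is exactly the stubborn case in the paper. Moreover, the claim that $\langle U_{\alpha}\mid \alpha\in\Phi\rangle$ "behaves like the $k$-points of a split semisimple group of type $\Phi$" is unjustified: root groups of Moufang buildings are parametrized by division algebras, field extensions and more exotic data, not by $k$. Finally, a small point: if one does show $U=\{1\}$, then since the $BN$-pair is split (hence saturated) the argument at the end of Lemma~\ref{key_lemma} already gives $B=H$, i.e.\ outright triviality; your extra step about $T$ having finite index in $G(k)$ is unnecessary and, as stated, unsupported.
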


We say a $BN$-pair $(B,N)$ of $G$ is \emph{virtually trivial} if $[G:B]<\infty$, or equivalently if the building $\D=\D(G,B)$ is finite. In \cite{caprace11} Conjecture~\ref{conj2} is shown to hold if $k$ is either local or perfect. In the present context we prove that for any infinite field $k$, if $G(k)$ consists of semisimple elements then Conjecture~\ref{conj1} holds. We also discover that, contrary to expectations, Conjecture~\ref{conj1} actually follows from Conjecture~\ref{conj2}, and so the two conjectures are equivalent. As explained in \cite{caprace11}, Conjecture~\ref{conj1} is related to a conjecture of Prasad and Rapinchuk that has so far only been proved in the $D^{\times}$ case,
namely that any finite quotient of an anisotropic reductive group must be solvable \cite{rapinchuk02}.

We phrase our main results without explicit reference to anisotropic groups, for the sake of full generality. By \cite{borel91}*{Corollary~18.3}, for any infinite field $k$, $G(k)$ is Zariski-dense in $G$. Also, if $G$ is $k$-anisotropic then in many cases $G(k)$ contains no non-trivial unipotent elements, or even consists solely of semisimple elements. These are the only properties we will need to prove our main results, though one should keep in mind the example of $H=G(k)$, and we will explicitly state our results with respect to the conjectures at the end.

The main results here are the following:

\begin{theorem}\label{refine_capr_marq}
Let $G$ be a reductive group over an algebraically closed field. Let $H$ be a subgroup of $G$ that is Zariski-dense in $G$. Then any virtually trivial split $BN$-pair of $H$ is trivial.
\end{theorem}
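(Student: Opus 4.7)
My plan is to show that, under the hypotheses of the theorem, the nilpotent part $U$ of $B$ is contained in the center $Z(G)$; this forces $nBn\I = B$ for every $n\in N$, contradicting the $BN$-pair axiom $sBs\I\neq B$ for simple reflections $s$ unless the Weyl group $W$ is trivial. Once $W=1$, the standard relation $H=\langle B\cup N\rangle$ immediately gives $H=B$.

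First I would establish that $U$ is finite. Virtual triviality gives $[H:B]<\infty$, and since the standard apartment has $|W|$ chambers realized as distinct cosets of $B$ in $H$, we get $|W|\leq [H:B]<\infty$. Saturation then expresses $T$ as the finite intersection $\bigcap_{w\in W} wBw\I$ of finite-index subgroups of $H$, so $[H:T]<\infty$, and the semidirect decomposition $B=U\rtimes T$ yields $|U|=[B:T]<\infty$.

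Next, the finite-index subgroup $T$ of $H$ is still Zariski-dense in $G$: its Zariski closure is a closed finite-index subgroup of the connected group $G$ and must therefore equal $G$. The normalizer $N_G(U)$ is Zariski-closed in $G$ (being a finite intersection of closed sets, since $U$ is finite) and contains $T$, so $N_G(U)=G$; that is, $U$ is normal in $G$. Any finite normal subgroup of a connected algebraic group is central: for each fixed $u\in U$ the conjugation morphism $G\to U$, $g\mapsto gug\I$, maps the connected variety $G$ into a finite set and so is constant at $u$. Hence $U\subseteq Z(G)$.

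Consequently $nUn\I=U$ for every $n\in N$, and $nTn\I=T$ since $T$ is normal in $N$, so $nBn\I=B$ for all $n\in N$. This contradicts the $BN$-pair axiom demanding $sBs\I\neq B$ for each simple reflection $s$, forcing $W=1$ and $N\subseteq B$, whence $H=B$. I expect the first step to be the most delicate: it is the combination of virtual triviality (giving both $|W|<\infty$ and $[H:B]<\infty$) with saturation (expressing $T$ as the finite intersection) that pins $U$ down as finite, and all three of these hypotheses are needed to launch the density-and-centrality argument that follows.
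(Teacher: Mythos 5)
Your proof is correct, but it follows a genuinely different route from the paper's. The paper first shows (via connectedness of $G$) that the finite-index subgroup $B$ is Zariski-dense, and then applies a key lemma whose proof uses the nilpotency of $U$ and the structure theory of reductive groups: $\overline{U}$ is normal in $G$, so $\overline{U}^0$ lies in the radical $Z(G)^0$, whence $[U:U\cap Z(G)]<\infty$; finiteness of $U$ then follows from the extra observation that $U\cap Z(G)$ acts trivially on the building and hence is trivial (since the kernel of the action lies in $T$ and $U\cap T=\{1\}$). You instead obtain finiteness of $U$ purely combinatorially: virtual triviality forces $|W|\leq[H:B]<\infty$, saturation plus Poincar\'e's lemma gives $[H:T]<\infty$, and $|U|=[B:T]$. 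After that you use density of the finite-index subgroup $T$ (rather than of $B$), closedness of $N_G(U)$, and the standard fact that a finite normal subgroup of a connected group is central, and you contradict the axiom $sBs^{-1}\neq B$ directly, just as the paper does with normality of $U$ in $H$. Your argument is more elementary and in fact more general: it never uses nilpotency of $U$ nor reductivity of $G$ beyond connectedness. What the paper's formulation buys in exchange is reusability: its key lemma only requires $G=\overline{B}Z(G)$, not $[H:B]<\infty$, and is applied again later (Proposition~\ref{T_v_triv_implies_triv}) in a situation where $B$ is merely dense and your index argument for the finiteness of $U$ would not be available.
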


In particular if we take $H=G(k)$ for some infinite field $k$, Conjecture~\ref{conj2} implies Conjecture~\ref{conj1}. Note however that Theorem~\ref{refine_capr_marq} still applies even if $G(k)$ is isotropic.

\begin{theorem}\label{no_unip_thm}
Let $G$ be a reductive group over an algebraically closed field. Let $H$ be a subgroup that is Zariski-dense in $G$ and contains no non-trivial unipotent elements. Then any split spherical $BN$-pair of $H$ having irreducible rank $\geq2$ is trivial.
\end{theorem}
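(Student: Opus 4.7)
The plan is to suppose for contradiction that $H$ admits a non-trivial split spherical $BN$-pair $(B,N)$ of irreducible rank $\geq 2$, to write $B = U \rtimes T$ with $U$ nilpotent, and to derive a contradiction. One first checks that $U$ must be non-trivial: otherwise $B = T$, saturation forces $N$ to normalize $T$, the Bruhat decomposition collapses to $H = N$, hence $[H:B] = |W| < \infty$, and Theorem~\ref{refine_capr_marq} would force $W = \{1\}$, against the rank assumption. The argument then proceeds in two stages: first, exploit the no-unipotent hypothesis to show $U$ is virtually abelian; second, use the rank $\geq 2$ root-group structure to contradict virtual abelianness.

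For the first stage, pass to the Zariski closure $\overline{U}$ in $G$; it is a closed nilpotent subgroup, and its identity component $\overline{U}\conn$ is a connected nilpotent algebraic group, hence decomposes canonically as $S \times V$ with $S$ its maximal subtorus and $V$ its unipotent part. Since every element of $V$ is unipotent in $G$ and $H$ contains no non-trivial unipotent element, $V \cap U = 1$. The finite-index subgroup $U_0 := U \cap \overline{U}\conn$ of $U$ therefore injects into $\overline{U}\conn / V \cong S$, so $U_0$ is abelian, $U$ is virtually abelian, and in particular $[U,U]$ is finite.

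For the second stage, recall that a split spherical $BN$-pair of rank $\geq 2$ is Moufang, so $U$ decomposes as an ordered product $U = \prod_{\al > 0} U_\al$ of root subgroups satisfying Chevalley-type commutator relations $[U_\al, U_\be] \subseteq \prod_{i,j \geq 1} U_{i\al + j\be}$. Choose adjacent simple roots $\al, \be$, available because the Dynkin diagram is connected of rank $\geq 2$; then $[U_\al, U_\be]$ is generated by the non-simple positive root groups of the rank-$2$ subsystem they span, and in each of types $A_2, B_2, G_2$ these meet every $W$-orbit of roots. Finiteness of $[U,U]$ (together with the fact that iterated commutators all sit in $[U,U]$ since $U$ is nilpotent) then forces every non-simple positive root group to be finite; conjugation by Weyl-group representatives in $N \leq H$, which sends $U_\gamma$ to $U_{w\gamma}$, transports this finiteness to the simple root groups as well, giving $|U_\gamma| < \infty$ for every positive root $\gamma$. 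Hence $U = \prod_\al U_\al$ is finite, Bruhat decomposition forces $[H:B] < \infty$, and one final application of Theorem~\ref{refine_capr_marq} produces the desired contradiction.

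The main obstacle is the second stage: one must invoke the Moufang root-group structure for split spherical $BN$-pairs of rank $\geq 2$, which is standard but non-trivial, and check carefully—using the commutator relations in rank-$2$ subsystems together with the Weyl-group action—that finiteness of $[U,U]$ actually reaches every root group. In non-simply-laced types this calls for a case analysis in the subsystems of type $B_2$ and $G_2$, where both $W$-orbits on roots have to be accessed; once this is in hand, the reduction to the already-proved Theorem~\ref{refine_capr_marq} is clean.
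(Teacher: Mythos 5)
The decisive step of your second stage does not hold. The Moufang commutator relations only give the containment $[U_{\al},U_{\be}]\subseteq\prod_{i,j\geq1}U_{i\al+j\be}$; they do \emph{not} say that this commutator generates, or even meets non-trivially, the interior root groups of the rank-2 subsystem. That kind of surjectivity is a Chevalley-group phenomenon, whereas $\D(H,B)$ is only known to be an abstract Moufang building whose rank-2 residues are Moufang $n$-gons with $n\in\{3,4,6,8\}$ (your case list $A_2,B_2,G_2$ also omits the octagons). For example, in Moufang quadrangles of indifferent (mixed) type one has $[U_1,U_3]=[U_2,U_4]=1$, the interior root groups $U_2,U_3$ are central in $U_+$ and are \emph{not} contained in $[U_+,U_+]$; so finiteness of $[U,U]$ gives no control whatsoever over the non-simple root groups, and your inference ``$[U,U]$ finite $\Rightarrow$ every non-simple positive root group finite'' fails. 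There is also a secondary gap feeding into this: virtual abelianness of a nilpotent group does not imply that its derived subgroup is finite (e.g.\ $(\Z/p^2)^{(\N)}\rtimes\Z/p$, with the generator acting by multiplication by $1+p$, is nilpotent of class 2 and virtually abelian but has infinite derived subgroup), so even the hypothesis ``$[U,U]$ finite'' that stage 2 consumes is unjustified as stated; your stage 1 legitimately yields only that $U$ is virtually abelian, which is exactly the paper's Lemma~\ref{nilp_implies_vabln}.

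The paper's route avoids both problems. It keeps only ``virtually abelian,'' choosing an abelian subgroup $V\leq U$ of finite index, and replaces the commutator bookkeeping by a building-theoretic non-commuting statement (Lemma~\ref{noncomm_roots}): for every simple root $\al$ there is an adjacent simple root $\be$ such that \emph{no} non-trivial element of $U_{\be}$ commutes with \emph{any} non-trivial element of $U_{\al}$; this is proved by a rigidity argument in the link of a codimension-2 face, which is a Moufang $n$-gon with $n>2$. Since $V\cap U_{\al}$ and $V\cap U_{\be}$ commute elementwise, one of them is trivial, so one root group is finite. The transfer from ``one root group finite'' to ``all root groups finite'' is then done via the Tits--Weiss classification of Moufang polygons --- note that Weyl conjugation, which you invoke, only moves finiteness within a single $W$-orbit (one root length) and cannot do this job by itself. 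Local finiteness plus sphericity makes $\D$ finite, and Theorem~\ref{refine_capr_marq} finishes, just as in your final step. To salvage your outline you would need a statement of this non-commuting/rigidity type in the rank-2 residues, not Chevalley-style surjectivity of commutators.
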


\begin{theorem}\label{all_ss_thm}
Let $G$ be a reductive group over an algebraically closed field. Let $H$ be a subgroup that is Zariski-dense in $G$ and consists only of semisimple elements. Then any split spherical $BN$-pair of $H$ is trivial.
\end{theorem}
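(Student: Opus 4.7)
The plan is to combine Theorem~\ref{no_unip_thm} with a direct analysis of the rank $1$ case. The bridge between the two hypotheses is the observation that an element which is both semisimple and unipotent in a linear algebraic group is necessarily trivial, so the condition ``only semisimple elements'' is strictly stronger than ``no non-trivial unipotents,'' making Theorem~\ref{no_unip_thm} available whenever we are in irreducible rank at least $2$.

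The first step is to reduce to the case where the Weyl group $W$ of $(B,N)$ is irreducible. Decompose $(W, S) = \prod_i (W_i, S_i)$ into irreducible factors; the corresponding parabolic subgroups $P_{S_i}$ of $H$ intersect in $B$ and jointly generate $H$, and each carries an inherited split spherical $BN$-pair with Weyl group $W_i$. Arguing that $(B, N)$ is trivial iff each sub-$BN$-pair is, and observing that the semisimplicity of $H$ passes to each $P_{S_i}$, reduces the problem to the irreducible case (the technical point being to equip each $P_{S_i}$ with a reductive ambient group, taking the connected component of its Zariski closure, which should be reductive precisely because $P_{S_i}$ consists of semisimple elements). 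The irreducible case of rank $\geq 2$ is then immediately dispatched by Theorem~\ref{no_unip_thm}.

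The remaining case $\rk(W) = 1$ is where the main work lies. Write $W = \{1, s\}$ and $B = U \rtimes T$. I would begin by studying the Zariski closure $\overline{U}$ in $G$. Since $U$ is nilpotent, $\overline{U}\conn$ decomposes as $T_U \times V_U$ with $T_U$ a torus and $V_U$ a unipotent group. The finite-index subgroup $U \cap \overline{U}\conn$ is Zariski dense in $\overline{U}\conn$, and its elements are semisimple; as the semisimple locus of the nilpotent group $T_U \times V_U$ is the Zariski closed subvariety $T_U \times \{1\}$, we conclude $\overline{U}\conn = T_U$, so $U$ is abelian and contained in a torus of $G$. By symmetry, $U^- := sUs\I$ is contained in the conjugate torus $sT_Us\I$, and saturation of the $BN$-pair yields $U \cap U^- = \{1\}$.

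The hard step is then to pass from ``$U$ abelian in a torus of $G$'' to ``$U = \{1\}$.'' The natural framework is that of Moufang sets: the rank $1$ $BN$-pair realizes $\Omega = H/B$ as an abelian Moufang set, with $U$ acting sharply transitively on $\Omega \setminus \{x_0\}$ and a $\mu$-map $\mu \colon U \setminus \{1\} \to N$ swapping $x_0$ with $s.x_0$. My plan is to use commutator and $\mu$-map manipulations between $U$ and $U^-$ to extract an element of $\langle U, U^- \rangle \leq H$ whose Jordan decomposition in $G$ has nontrivial unipotent part: although each of $U$ and $U^-$ individually lies in a torus, the subgroup they generate inside $G$ is rank-$1$-reductive-like and must contain genuine unipotent elements. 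Producing such an element in $H$ itself is the heart of the argument and the step I expect to require the most delicate calculation; once obtained, the element is both semisimple by hypothesis and unipotent by construction, hence trivial, and the resulting contradiction (under the assumption that $U \neq \{1\}$) forces $U = \{1\}$ and hence $(B,N)$ trivial.
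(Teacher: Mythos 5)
Your proposal does not close the case that actually carries all the difficulty. The reduction to irreducible factors plus Theorem~\ref{no_unip_thm} is plausible in outline (though the induced $BN$-pair on each $P_{S_i}$ needs to be checked to be \emph{split}, i.e.\ saturated, and the reductivity of the closure of $P_{S_i}$ is asserted rather than proved), but the rank $1$ case is left as a declared plan: you say you intend to use $\mu$-map and commutator manipulations in the Moufang set $H/B$ to produce an element of $\langle U, sUs\I\rangle\leq H$ with non-trivial unipotent part, and you acknowledge that producing it is ``the heart of the argument.'' No mechanism for this is given, and it is exactly the hard point: every element of $\langle U,sUs\I\rangle$ lies in $H$ and is therefore semisimple by hypothesis, so nothing in the abstract Moufang-set structure hands you a unipotent element of $H$; what you would really need is a structural classification of proper Moufang sets with (virtually) abelian root groups, which is a genuinely difficult problem in its own right and not something the density of $H$ in $G$ obviously settles. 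A smaller but real inaccuracy in the same step: from $\overline{U}^{\,0}$ being a torus you conclude that $U$ is abelian, but a nilpotent group of semisimple elements need not be abelian (the quaternion group inside $\SL_2$ is a standard example); you only get that $U$ is centre-by-finite.

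For comparison, the paper handles all ranks, including rank $1$, uniformly and without Moufang-set theory: since $U$ is nilpotent and consists of semisimple elements, a rigidity argument for diagonalizable groups (Lemma~\ref{rigidity_lemma}, via $[U:Z(U)]<\infty$ and rigidity of diagonalizable subgroups) gives $[T:C_T(U)]<\infty$; elements of $C_T(U)$ fix the fundamental chamber and all chambers opposite to it, hence lie in the kernel of the action on $\D$, so $T$ acts virtually trivially; then Proposition~\ref{T_v_triv_implies_triv}, which rests on the dimension count of Proposition~\ref{B_not_v_nilp} (a double coset of a virtually nilpotent subgroup cannot be Zariski-dense in a nonabelian reductive group) together with Lemma~\ref{key_lemma}, forces $U$ and $\D$ to be trivial. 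If you want to salvage your outline, the rank-$1$ step should be replaced by an argument of this kind rather than by the hoped-for unipotent element.
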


In Section~\ref{sec:v_triv_to_triv} we prove Theorem~\ref{refine_capr_marq}. In particular if $k$ is local or perfect then by \cite{caprace11} Conjecture~\ref{conj2} holds, and so we conclude that even Conjecture~\ref{conj1} holds. We next present a proof of Theorem~\ref{no_unip_thm} in Section~\ref{sec:big_rank}, and prove Theorem~\ref{all_ss_thm} in Section~\ref{sec:any_rank}. Note that applying Theorem~\ref{all_ss_thm} with $H=G(k)$, the conclusion of Theorem~\ref{no_unip_thm} can be sharpened in some cases, e.g. if $k$ is perfect. In this way Conjecture~\ref{conj1} is shown to hold for the case when $k$ is perfect using a different line of attack than in \cite{caprace11}. We also establish Conjecture~\ref{conj1} for certain $G$ with no restriction on $k$, for example if $G(k)$ is the multiplicative group of a division algebra; see Section~\ref{sec:div_algs_and_conclusions}.

We mention that a much more general program is carried out by Prasad in \cite{prasad11}, which in particular establishes all the results proved here, and more. Our methods are very different, however, and could possibly be useful in other contexts, so the present work is still of interest.

\section{Virtually trivial implies trivial}\label{sec:v_triv_to_triv}

In this section we prove Theorem~\ref{refine_capr_marq}. The following key lemma plays an important role here as well as in Proposition~\ref{T_v_triv_implies_triv} below.
The first part of the proof below mimics parts of the proof of Theorem~4 in \cite{caprace11}.

\begin{lemma}\label{key_lemma}
Let $G$ be a reductive group over an algebraically closed field $K$, and let $H$ be any subgroup of $G$. Suppose that $H$ possesses a split $BN$-pair $(B=U\rtimes T,N)$ such that $G=\overline{B}Z(G)$, where $\overline{B}$ is the Zariski closure of $B$ in $G$. Then $U$ is trivial.
\end{lemma}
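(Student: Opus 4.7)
Proof plan.

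The strategy is to show that $\overline{U}$ is central in $G$, and then use the saturation of the $BN$-pair to force $U$ itself to be trivial. First I would observe that $\overline{U}$ is a closed (hence algebraic) subgroup of $G$ and is nilpotent, since the Zariski closure of any nilpotent subgroup of an algebraic group is nilpotent (the containment $[\overline{A},\overline{B}]\subseteq \overline{[A,B]}$ propagates through the lower central series). Because $B$ normalizes $U$, the closure $\overline{B}$ normalizes $\overline{U}$; and $Z(G)$ normalizes everything. So the hypothesis $G=\overline{B}Z(G)$ yields that $\overline{U}$ is normal in $G$.

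Next I would analyze $\overline{U}^\circ$. Its unipotent radical $R_u(\overline{U}^\circ)$ is characteristic in $\overline{U}^\circ$ and hence normal in $G$, so by reductivity of $G$ it is trivial. Thus $\overline{U}^\circ$ is a connected nilpotent group with no unipotent radical, which forces it to be a torus. Being a normal torus in the connected group $G$, it lies in $Z(G)$.

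The main work is bootstrapping this to the conclusion $\overline{U}\subseteq Z(G)$. Since $G$ is connected and $\overline{U}/\overline{U}^\circ$ is finite, the conjugation action of $G$ on this quotient is trivial. Hence for each fixed $u\in\overline{U}$ the map
\[
\phi_u\colon G\longrightarrow \overline{U}^\circ,\qquad g\longmapsto [g,u]
\]
is well defined. I expect the key calculation to be that, because the values of $\phi_u$ lie in $\overline{U}^\circ\subseteq Z(G)$ and therefore commute with everything, $\phi_u$ is actually a group homomorphism (a short computation with $g_2ug_2^{-1} = u\phi_u(g_2)$ and centrality gives $\phi_u(g_1g_2)=\phi_u(g_1)\phi_u(g_2)$). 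This is the real crux of the argument. Once $\phi_u$ is a morphism of algebraic groups into a torus, it kills $[G,G]$ (semisimple groups are perfect), and it kills $Z(G)$ since $Z(G)$ centralizes $u$. Using $G=[G,G]\cdot Z(G)^\circ$ for connected reductive $G$, we conclude $\phi_u\equiv 1$, so $u\in Z(G)$. Hence $\overline{U}\subseteq Z(G)$, and in particular $U\subseteq Z(G)\cap H\subseteq Z(H)$.

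Finally I would finish with the saturation condition. Since $U$ is central in $H$, it is normalized by every $n\in N$, so $U\subseteq nBn^{-1}$ for all $n\in N$, giving
\[
U\;\subseteq\;\bigcap_{w\in W}\dot{w}B\dot{w}^{-1}\;=\;T.
\]
But $B=U\rtimes T$ forces $U\cap T=1$, so $U=1$, as desired. The main obstacle I foresee is verifying cleanly that $\phi_u$ is a homomorphism and that the reductive structure of $G$ really does rule out nontrivial morphisms $G\to\overline{U}^\circ$; everything else is routine once those points are in hand.
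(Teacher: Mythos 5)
Your proof is correct, but its core takes a genuinely different route from the paper's. Both arguments open identically: since $G=\overline{B}Z(G)$, the nilpotent group $\overline{U}$ is normal in $G$, and its identity component is central (the paper places $\overline{U}^0$ in the radical, which equals $Z(G)^0$; you kill its unipotent radical by reductivity and then invoke rigidity of the resulting normal torus). From that point the paper extracts only finiteness: $[U:U\cap Z(G)]\leq[\overline{U}:\overline{U}^0]<\infty$, then kills $U\cap Z(G)$ by letting it act on the building (central elements act trivially, the kernel of the action lies in $T$ by saturation, and $U\cap T=\{1\}$), and finally, with $U$ finite and hence closed and normal in $H$, uses the axiom $sBs\not\leq B$ to force $S=\emptyset$, $B=H$, and a trivial building. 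You instead prove the stronger fact that all of $\overline{U}$ is central in $G$: connectedness of $G$ places the values of $\phi_u(g)=[g,u]$ in $\overline{U}^{\circ}\subseteq Z(G)$, centrality of those values makes $\phi_u$ a homomorphism of algebraic groups into a torus, and it vanishes on $[G,G]$ and on $Z(G)$, hence on $G=[G,G]\,Z(G)^{\circ}$; then saturation gives $U\subseteq\bigcap_{w\in W}wBw^{-1}=T$, so $U=\{1\}$. Your route is self-contained on the algebraic-group side and only touches the $BN$-pair structure through saturation, while the paper gets by with a weaker algebraic input but compensates with the $BN$-pair axioms; note also that the paper's proof delivers the extra conclusions $B=H$ and triviality of $\Delta$, which are invoked later (e.g.\ in Proposition~\ref{T_v_triv_implies_triv}), whereas from your endpoint $U=\{1\}$ one still needs the short additional observation that then $B=T$ is normalized by $N$, again forcing $S=\emptyset$.
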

\begin{proof}
We first show that $U$ is finite. Since $U$ is nilpotent and normal in $B$, we know that $\overline{U}$ is nilpotent and normal in $\overline{B}$ \cite{borel91}*{Section~2.1}. By hypothesis $G=\overline{B}Z(G)$, so in fact $\overline{U}$ is normal in $G$. This implies that the identity component $\overline{U}^0$ is contained in the radical of $G$, which coincides with $Z(G)^0$ \cite{borel91}*{Proposition~11.21}. Now as in the proof of \cite{caprace11}*{Theorem~4} we have
$$[U:U\cap Z(G)]\leq[U:U\cap\overline{U}^0]=[U\overline{U}^0:\overline{U}^0]\leq[\overline{U}:\overline{U}^0]<\infty.$$
Also, if $u\in U\cap Z(G)$, then for any $g\in H$ we have $ugC=guC=gC$ where $C$ is the fundamental chamber in $\D=\D(H,B)$. Of course every chamber of $\D$ is of the form $gC$ for some $g\in H$, so $u$ acts trivially on $\D$. But $T$ contains the kernel of the action and $U\cap T=\{1\}$, so in fact $u=1$. We conclude that $U$ is finite.

We now want to show that $U$, or equivalently $\D$, is even trivial. Since $U$ is finite, $U=\overline{U}$, which we know is normal in $G$. Thus $U$ is normal in $H$. Now suppose $S\neq\emptyset$. Let $s\in S$, so by the $BN$-axioms $sBs\not\leq B$. But $s$ normalizes $T$, and since $U$ is normal in $H$ we know that $s$ also normalizes $U$, so this is impossible. We conclude that $S=\emptyset$, so $N=T\leq B$ and in fact $B=H$. Since the chambers of $\D$ are in one-to-one correspondence with $H/B$, we conclude that $\D$, and thus $U$, is trivial.
\end{proof}

\begin{lemma}\label{v_triv_implies_B_dense}
Let $G$ be a connected linear algebraic group, $H$ a Zariski-dense subgroup of $G$ and $B$ any subgroup of $H$. Then either $B$ has infinite index in $H$ or $B$ is Zariski dense in $G$.
\end{lemma}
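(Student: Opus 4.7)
The plan is to assume $[H:B]<\infty$ and deduce that $\overline{B}=G$, which is the contrapositive of what we want. Write $H$ as a finite union of left cosets $H=\bigcup_{i=1}^n g_iB$ with $g_i\in H$. Taking Zariski closures and using the fact that left translation by a group element is a homeomorphism and that closure commutes with finite unions, I get
\[
G\;=\;\overline{H}\;=\;\bigcup_{i=1}^n \overline{g_iB}\;=\;\bigcup_{i=1}^n g_i\overline{B}.
\]

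Next I invoke the fact that a connected linear algebraic group is irreducible as a variety (see e.g.\ \cite{borel91}*{Proposition~I.1.2}). In an irreducible topological space, a finite union of proper closed subsets is proper: if $X$ is irreducible and $X=\bigcup C_i$ with each $C_i$ closed, then since $X\setminus C_1$ is open (hence dense when nonempty) and contained in $C_2\cup\cdots\cup C_n$, irreducibility forces $X=C_i$ for some $i$. Each $g_i\overline{B}$ is closed in $G$ (translation is a homeomorphism and $\overline{B}$ is closed), so the decomposition above forces $g_i\overline{B}=G$ for some $i$. Multiplying by $g_i^{-1}$ on the left yields $\overline{B}=G$, as desired.

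The only mild subtlety is the justification that $\overline{B}$ is a subgroup and that translation behaves well with closures, but both are standard: the closure of a subgroup in an algebraic group is again a subgroup \cite{borel91}*{Section~2.1}, and translation by any fixed element of $G$ is a morphism of varieties, hence a homeomorphism in the Zariski topology. So I expect no real obstacle; the entire argument is a short topological observation once one recognizes that the hypothesis $[H:B]<\infty$ expresses $H$ as a finite union of translates of $B$.
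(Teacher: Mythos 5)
Your argument is correct and is essentially the paper's own proof: write $H$ as a finite union of translates of $B$, pass to Zariski closures, and use connectedness (equivalently, irreducibility) of $G$ to force one translate $g_i\overline{B}$, hence $\overline{B}$ itself, to be all of $G$. The explicit appeal to irreducibility is just a slightly more detailed justification of the paper's one-line "since $G$ is connected" step.
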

\begin{proof}
Assume that $B$ has finite index in $H$. Let $h_1,\dots,h_n$ be a set of coset representatives for $H/B$, and let $\overline{B}$ be the Zariski closure of $B$ in $G$. Then $G=\overline{H}$ is the union of the cosets $h_1\overline{B},\dots,h_n\overline{B}$. Since $G$ is connected, $G=h_i\overline{B}$ for some $i$, and hence also $G=\overline{B}$.
\end{proof}

We are now in a position to prove Theorem~\ref{refine_capr_marq}.

\begin{proof}[Proof of Theorem~\ref{refine_capr_marq}]
Let $G$ be a reductive group and $H$ a Zariski-dense subgroup. Let $(B=U\rtimes T,N)$ be a split $BN$-pair of $H$ such that $[H:B]<\infty$. By Lemma~\ref{v_triv_implies_B_dense} $B$ is Zariski dense in $G$, and by Lemma~\ref{key_lemma}, $U$ is trivial.
\end{proof}

In particular if $H=G(k)$ this shows that Conjectures~\ref{conj1} and \ref{conj2} are equivalent. We immediately get the following:

\begin{corollary}[Extension of Theorems~3 and 4 from \cite{caprace11}]\label{extend_cap_marq}
Let $k$ be either a local field or a perfect field. Let $G$ be a reductive $k$-group that is anisotropic over $k$. Then $G(k)$ does not admit any non-trivial split spherical $BN$-pairs.\qed
\end{corollary}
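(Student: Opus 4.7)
The plan is to concatenate Theorem~\ref{refine_capr_marq} with the main results of \cite{caprace11}. Given a reductive $k$-group $G$ anisotropic over $k$ (with $k$ local or perfect) and a split spherical $BN$-pair $(B,N)$ of $G(k)$, my first step is to invoke Theorems~3 and 4 of \cite{caprace11}, which establish Conjecture~\ref{conj2} precisely in these two cases; this yields $[G(k):B]<\infty$, so $(B,N)$ is virtually trivial. The second step is to upgrade virtual triviality to triviality via Theorem~\ref{refine_capr_marq}. Since that theorem is stated over an algebraically closed field, I would base change to $\overline{k}$ and view $G(k)$ as a subgroup of the reductive $\overline{k}$-group $G_{\overline{k}}$. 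For $k$ infinite, Zariski-density of $G(k)$ in $G_{\overline{k}}$ is the content of \cite{borel91}*{Corollary~18.3}, which covers every local field as well as every infinite perfect field. Theorem~\ref{refine_capr_marq} then applies with $H=G(k)$ and delivers triviality of $(B,N)$.

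The only remaining case is when $k$ is a finite (hence perfect) field. Here I would dispatch matters separately: by Lang's theorem every reductive group over a finite field is quasi-split, so the additional $k$-anisotropy hypothesis forces $G$, and hence $G(k)$, to be trivial, making the corollary vacuous. I therefore expect no real obstacle: the substantive work has already been carried out in Theorem~\ref{refine_capr_marq} and in \cite{caprace11}, and the corollary is literally a two-step concatenation of these results, with a brief bookkeeping remark to handle finite fields.
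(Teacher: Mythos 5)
Your main argument is exactly the paper's: the corollary is obtained by feeding Theorems~3 and 4 of \cite{caprace11} (which give virtual triviality, i.e. Conjecture~\ref{conj2}, for $k$ local or perfect) into Theorem~\ref{refine_capr_marq} with $H=G(k)$, using \cite{borel91}*{Corollary~18.3} for Zariski-density when $k$ is infinite. The only point to correct is your finite-field remark: Lang's theorem makes $G$ quasi-split, and quasi-split together with $k$-anisotropic forces $G$ to be a \emph{torus}, not the trivial group --- anisotropic tori over finite fields abound (e.g. norm-one tori of quadratic extensions) and have non-trivial groups of rational points. The case is still harmless, but for a different reason: $G(k)$ is then abelian, and an abelian group admits no non-trivial $BN$-pair, since $sBs^{-1}=B$ for every $s\in S$ would violate the $BN$-pair axioms unless $S=\emptyset$.
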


\section{Unipotent-free groups and split spherical BN-pairs without rank 1 factors}\label{sec:big_rank}

Let $G$ be a group with a $BN$-pair $(B,N)$ of spherical type $(W,S)$. In this section, our standing assumption is that the Coxeter system $(W,S)$ has no direct rank 1 factors,
which means that the corresponding Coxeter diagram has no isolated nodes. As usual, we set $T = \dis\bigcap_{w\in W}wBw\I$.
By the main result of \cite{demedts05}, the existence of a splitting $B=U\rtimes T$ with nilpotent $U$ implies
that the building $\D=\D(G,B)$ is Moufang and $U=U_+$, the group generated by all the root groups $U_{\al}$ for $\al\in\Phi_+$. Here $\Phi$ is the root system of a fixed fundamental apartment corresponding to $N$, and $\Phi_+$ is the set of all $\al \in \Phi$ that contain the fundamental chamber corresponding to $B$.
In this section we prove Theorem~\ref{no_unip_thm} by inspecting the root structure.

Recall that the concepts of spherical Moufang buildings and spherical RGD systems are equivalent \cite{abr08}*{Example 7.83 and Theorem 7.116}.
The following is a general lemma about spherical (or just 2-spherical) RGD systems.

\begin{lemma}\label{noncomm_roots}
Let $(G,(U_{\al})_{\al\in\Phi}),T)$ be an RGD system of spherical type $(W,S)$, such that the Coxeter diagram of $(W,S)$ has no isolated nodes. Let $\Phi_+$ be a choice of positive roots and $\Pi\subseteq\Phi_+$ a choice of simple roots. Then for any simple root $\al\in\Pi$, there exists a simple root $\be$ such that no non-trivial elements of $U_{\be}$ commute with any non-trivial elements of $U_{\al}$.
\end{lemma}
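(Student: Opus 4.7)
The plan is as follows. Given $\al \in \Pi$, the no-isolated-nodes hypothesis produces a simple root $\be \in \Pi \setminus \{\al\}$ with $m_{\al\be} =: m \geq 3$, where $m_{\al\be}$ denotes the order of $s_\al s_\be$ in $W$. I claim any such $\be$ satisfies the conclusion of the lemma.

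Restricting the RGD data to the rank-$2$ root subsystem $\Phi'$ spanned by $\al$ and $\be$ yields a rank-$2$ spherical sub-RGD system, whose building is a Moufang $m$-gon; by Tits--Weiss, $m \in \{3,4,6,8\}$. Enumerate the positive roots of $\Phi'$ in cyclic order as $\al = \gamma_1, \gamma_2, \ldots, \gamma_m = \be$. The RGD commutator axiom gives
\[
[U_\al, U_\be] \subseteq U_{\gamma_2} U_{\gamma_3} \cdots U_{\gamma_{m-1}},
\]
and the unique factorization of $U_+$ means each element on the right has a unique expression $v_2 v_3 \cdots v_{m-1}$ with $v_i \in U_{\gamma_i}$. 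So proving $[u_\al, u_\be] \neq 1$ for non-trivial $u_\al, u_\be$ reduces to exhibiting one non-trivial $\gamma_i$-component.

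For this I would draw on the known commutator formulas for Moufang polygons. In each of the four Moufang-polygon types the commutator of the two ``end'' root groups is non-degenerate in the strong sense we need: non-trivial $u_\al, u_\be$ always yield non-trivial $[u_\al, u_\be]$. For $m = 3$ (Moufang planes over a skew field $D$), one has $[u_\al(s), u_\be(t)] = u_{\al+\be}(\pm st) \neq 1$ whenever $s, t \neq 0$; for $m = 4, 6, 8$ the analogous formulas involve higher-degree monomials in the root-group parameters, but the same non-degeneracy holds.

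The main obstacle is that last non-degeneracy assertion, which I do not see how to derive purely from the abstract RGD axioms. The cleanest routes are either a case-by-case appeal to the Tits--Weiss classification of Moufang polygons, or a direct citation of a reference (such as \cite{abr08}) where the non-degeneracy of the relevant commutator maps is recorded in sufficient generality.
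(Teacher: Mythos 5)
Your reduction to rank $2$ is sound: choosing $\be$ adjacent to $\al$ in the diagram, passing to the rank-$2$ subsystem, and using the commutator relation together with unique factorization in $U_+$ correctly reduces the lemma to the statement that in a Moufang $m$-gon ($m\geq 3$) no non-trivial element of one terminal root group commutes with a non-trivial element of the other. But that statement \emph{is} the lemma (in rank $2$), and it is exactly the ``non-degeneracy assertion'' you flag as an obstacle and do not prove. You verify it only for $m=3$ (and even there Moufang planes are coordinatized by alternative division rings, e.g.\ octonion division algebras, not just skew fields), while asserting that ``the same non-degeneracy holds'' for $m=4,6,8$. That is precisely what would have to be checked: for quadrangles alone the Tits--Weiss classification has six families, including the quadratic-form types in characteristic $2$ and the exceptional types of type $E_6$, $E_7$, $E_8$, $F_4$, and the claim is not an RGD axiom nor a formal consequence of $[U_\al,U_\be]\subseteq U_{\gamma_2}\cdots U_{\gamma_{m-1}}$. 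So as written the proposal defers its essential step to an unproved claim or an unspecified citation; this is a genuine gap.

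The statement is true, and the paper proves it without any appeal to the classification, by a geometric rigidity argument you could adopt to close the gap. With $\al=\al_s$ and $\be=\al_t$ for $t$ adjacent to $s$, pass to the link $\D'=\lk_{\D}(A)$ of the cotype-$\{s,t\}$ face $A$ of the fundamental chamber; this is a Moufang $n$-gon with $n>2$, and $U_{\al}$, $U_{\be}$ are identified with root groups of $\D'$ via restriction (\cite{abr08}*{Proposition~7.32}). If $1\neq a\in U_{\al'}$ and $b\in U_{\be'}$ commute, then $b$ stabilizes $a\be'$ (because $b\be'=\be'$) and fixes it pointwise; since $a\neq1$ and $n>2$, the set $\be'\cup a\be'$ contains a pair of opposite chambers, so $b$ fixes an apartment pointwise together with all chambers adjacent to a chamber of $\be'$ having no panel in $\partial\be'$, and the rigidity theorem (\cite{abr08}*{Corollary~5.206}) forces $b$ to act trivially on $\D'$, whence $b=1$ back in $\D$. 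If you prefer to keep your commutator-formula route, you must actually carry out the case-by-case verification in Tits--Weiss (which is where the real work would lie); the classification is needed in this paper only later, for the fact that finiteness of one root group forces finiteness of all, not for this lemma.
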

\begin{proof}
Let $U_+$ and $B_+=U_+T$ be the usual subgroups. We consider the Moufang building $\D=\D(G,B_+)$ with fundamental chamber $C$ and fundamental apartment $\So$.
Let $\al=\al_s$ be the simple root corresponding to $s\in S$, so $C \in\al$ but $sC \not\in\al$. Hence there exists a panel $P$ of $C$ in the boundary of $\al$, and $P$ has cotype $s$. Choose $t\in S\backslash\{s\}$ such that $s$ and $t$ are connected in the Coxeter diagram, and let $Q$ be the panel of $C$ of cotype $t$. Denote by $\be$ the root containing $C$ and having $Q$ in its boundary, i.e. $\be$ is the simple root $\al_t$. We set $A:=P\cap Q$, so $A$ has cotype $\{s,t\}$.

Let $\D':=\lk_{\D}(A)$ be the link of $A$ in $\D$, and set $\al':=\al\cap\D'$, $\be':=\be\cap\D'$, $C':=C\cap\D'$, and $\So':=\So\cap\D'$. By \cite{abr08}*{Proposition~7.32}, $\D'$ is strictly Moufang and we can identify $U_{\al'}$ and $U_{\be'}$ with $U_{\al}$ and $U_{\be}$ respectively, via the natural restriction map. In fact, since $A$ has codimension 2, $\D'$ is a generalized Moufang $n$-gon, and since $s$ and $t$ are connected in the Coxeter diagram we know that $n>2$.

Let $1\neq a\in U_{\al'}$ and $b\in U_{\be'}$ and suppose $ab=ba$. Then $ba\be'=ab\be'=a\be'$ and $C' \in a\be'$ is fixed by $b$, so $b$ fixes $a\be'$ pointwise.
Since $\al'\cap\be'=\{C'\}$, $n>2$, and $a\neq 1$, we know that $\be'\cup a\be'$ contains a pair of opposite chambers. If $\Si'$ is the apartment containing these, clearly $b$ fixes $\Si'$ pointwise. But $\Si'$ also contains some chamber $D'$ of $\be'$ that has no panels in $\partial\be'$. Since $b$ fixes $\Si'$ and all chambers of $\D'$ adjacent to $D'$, by the rigidity theorem \cite{abr08}*{Corollary~5.206} we conclude that $b$ is the identity on $\D'$, and hence $b=1$. See Figure~\ref{fig} for an idea of the $n=3$ situation.

Now we return to the original building $\D$. If $1\neq a\in U_{\al}$ and $b\in U_{\be}$ commute, then by the above argument $b$ acts trivially on $\lk_{\D}(A)$. In particular $b$ fixes any simplex joinable to $Q$. Since $Q\in\partial\be$ this implies that $b=1$, and the result follows.
\end{proof}

\begin{figure}[h]
\caption{Example for $n$=3}
\label{fig}
\includegraphics[scale=.4]{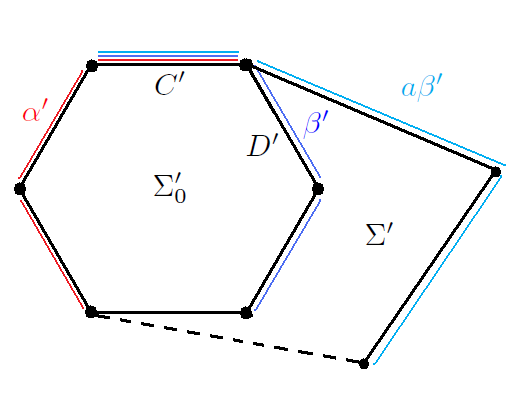}
\end{figure}

\begin{lemma}\label{nilp_implies_vabln}
Let $H$ be any nilpotent linear group having no non-trivial unipotent elements. Then $H$ is virtually abelian.
\end{lemma}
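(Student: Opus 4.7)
The plan is to reduce, via Zariski closure and the Lie--Kolchin theorem, to the case where a finite-index subgroup of $H$ consists of upper triangular matrices, at which point the diagonal homomorphism embeds that subgroup into a torus because its kernel consists only of unipotent elements.

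First I would embed $H$ inside some $\GL_n(K)$, pass to the algebraic closure $\overline{K}$, and take $\overline{H}$ to be the Zariski closure of $H$ in $\GL_n(\overline{K})$. Since nilpotency of a fixed class is cut out by polynomial equations in the matrix entries, $\overline{H}$ is again nilpotent. Let $\overline{H}^0$ be the identity component and set $H':=H\cap\overline{H}^0$, a finite-index subgroup of $H$; it then suffices to show $H'$ is abelian. Since $\overline{H}^0$ is connected and solvable, the Lie--Kolchin theorem furnishes a basis with respect to which every element of $\overline{H}^0$, and in particular of $H'$, is upper triangular. The map $d$ sending an upper triangular matrix to its diagonal is then a group homomorphism into the diagonal torus $D$, and its kernel is the strictly upper triangular (hence unipotent) subgroup. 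Restricting $d$ to $H'$, the kernel is contained in the set of unipotent elements of $H$, which is trivial by hypothesis, so $d|_{H'}$ embeds $H'$ into the abelian group $D$.

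I do not anticipate a substantive obstacle here: the lemma reduces to a one-line application of Lie--Kolchin once one observes that Zariski closure preserves nilpotency. The only care required is passing to an algebraic closure (so that Kolchin applies) and intersecting with the identity component to obtain a connected closure whose triangularization descends to a triangularizable finite-index subgroup of $H$.
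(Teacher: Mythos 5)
Your proof is correct, but it takes a genuinely different route from the paper's. The paper stays entirely within nilpotent structure theory: it invokes \cite{borel91}*{Theorem~10.6(3)} to write the connected closure as a direct product $\overline{H}^0=(\overline{H}^0)_s\times(\overline{H}^0)_u$, deduces that $[\overline{H}^0,\overline{H}^0]$ consists of unipotent elements, and concludes that $[H',H']=\{1\}$ for $H'=H\cap\overline{H}^0$ since $H$ has no non-trivial unipotents. You instead triangularize $\overline{H}^0$ by Lie--Kolchin and embed $H'$ into the diagonal torus via the diagonal homomorphism, whose kernel on $H'$ consists of unipotent elements and is therefore trivial. Both arguments rest on the same two preliminary facts (nilpotency passes to the Zariski closure, and $H'$ has finite index in $H$) and both prove abelianness of the same subgroup $H'$, so the proofs are equally efficient; what yours buys is generality, since Lie--Kolchin needs only that $\overline{H}^0$ be connected and solvable, so your argument shows that every \emph{solvable} linear group without non-trivial unipotent elements is virtually abelian, while the paper's direct-product decomposition is special to nilpotent groups (which suffices there, as the lemma is only applied to the nilpotent group $U$). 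Conversely, the paper's route avoids choosing a triangularizing basis. One small point of hygiene: your parenthetical justification that $\overline{H}$ is nilpotent (``nilpotency of a fixed class is cut out by polynomial equations'') is best replaced by the standard one-variable-at-a-time closure argument for group laws, or by a citation such as \cite{borel91}*{Section~2.1}; this is routine and not a gap.
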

\begin{proof}
Assume $H\leq\GL_n(K)$ for some algebraically closed field $K$. Since $H$ is nilpotent, so is the Zariski closure $\overline{H}$. By \cite{borel91}*{Theorem~10.6(3)} then, the connected component $\overline{H}^0$ decomposes as a direct product of its semisimple and unipotent parts, $\overline{H}^0=(\overline{H}^0)_s\times(\overline{H}^0)_u$. Since $(\overline{H}^0)_s$ is abelian and the product is direct, we know that $[\overline{H}^0,\overline{H}^0]=[(\overline{H}^0)_u,(\overline{H}^0)_u]$. Now set $H':=H\cap\overline{H}^0$. Of course $[H',H']\leq H$, and also we see that $[H',H']\leq[(\overline{H}^0)_u,(\overline{H}^0)_u]$, which consists of unipotent elements. Since $H$ has no non-trivial unipotent elements, in fact $H'$ is abelian. Also, $[H:H']<\infty$ so indeed $H$ is virtually abelian.
\end{proof}

\begin{proof}[Proof of Theorem~\ref{no_unip_thm}]
Let $G$ be a reductive group and $H$ a Zariski-dense subgroup such that $H\cap G_u=\{1\}$. Let $(B=U\rtimes T,N)$ be a split spherical $BN$-pair of $H$ of type $(W,S)$ without rank 1 factors. We claim that $B=H$.
Since the Coxeter diagram of $(W,S)$ has no isolated nodes, we can use Lemma~\ref{noncomm_roots}. Let $\D=\D(H,B)$ with fundamental apartment $\So$ and fundamental chamber $C$. Denote the root system by $\Phi$ and the root groups by $U_{\al}$, so $U=U_+=\gen U_{\al}\mid\al\in\Phi_+\by$. We know that $U$ is nilpotent, and so by Lemma~\ref{nilp_implies_vabln} we can choose $V\leq U$ abelian such that $[U:V]<\infty$.

Given any system of positive roots $\Phi_+$ and any simple root $\al \in \Phi_+$, Lemma~\ref{noncomm_roots} shows that we can choose a simple root $\be$ such that no non-trivial elements of $U_{\al}$ commute with any non-trivial elements of $U_{\be}$. Since $V$ is abelian, this shows that either $U_{\al}\cap V=\{1\}$ or $U_{\be}\cap V=\{1\}$. In either case at least one of the root groups is finite. However, by the classification of Moufang polygons \cite{tits_weiss02}, if one of the root groups is finite then they must \emph{all} be finite. Since $\D$ is Moufang, this implies that $\D$ is \emph{locally finite}, i.e. each panel in $\D$ is contained in finitely many chambers. Thus for any $w\in W$ there are finitely many chambers of $\D$ at Weyl distance $w$ from $C$. Since $W$ itself is finite, we conclude that $\D$ is finite. By Theorem~\ref{refine_capr_marq} it is even trivial, and so the $BN$-pair is trivial.
\end{proof}

We now take a moment to discuss some examples of unipotent-free anisotropic groups. Let $D$ be a finite dimensional central $k$-division algebra and suppose $x\in D$ is unipotent. Then $x-1$ is a nilpotent element of $D$, which since $D$ is a division algebra implies that $x-1=0$ and $x=1$. In particular the anisotropic groups $D^{\times}$ and $\SL_1(D)$ are unipotent-free. The theorem tells us they admit no split spherical $BN$-pairs of rank greater than 1, though as we will see in the last section we can eliminate the rank-1 case as well.

Also, if $k$ is perfect then any anisotropic $G(k)$ is unipotent-free, as explained in \cite{caprace11}*{Proposition~4.1}. Thus the proof of Theorem~\ref{no_unip_thm} constitutes an alternate proof of Conjecture~\ref{conj1} in case $k$ is perfect, at least for $BN$-pairs of rank $\geq2$. Again, in the next section the rank-1 case will also be eliminated.

Note that the only time we used our precise setup in the proof of Theorem~\ref{no_unip_thm} was to see that if $U$ is nilpotent then it is already virtually abelian. In fact, the same proof yields the following

\begin{proposition}\label{v_abln_implies_triv}
Let $G$ be a group with a split spherical $BN$-pair $(B=U\rtimes T,N)$ of type $(W,S)$ without rank 1 factors. If $U$ is virtually abelian then $U$ is finite.\qed
\end{proposition}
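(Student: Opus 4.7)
The plan is to extract from the proof of Theorem~\ref{no_unip_thm} exactly the portion that uses only virtual abelianness of $U$, since that is precisely the hypothesis here (in Theorem~\ref{no_unip_thm}, virtual abelianness was obtained as an intermediate step via Lemma~\ref{nilp_implies_vabln}). By the main result of \cite{demedts05}, the splitting $B=U\rtimes T$ with nilpotent $U$ together with the absence of rank 1 factors forces the building $\D=\D(G,B)$ to be Moufang and $U=U_+=\gen U_{\al}\mid\al\in\Phi_+\by$ with respect to a chosen fundamental apartment, so we may argue directly with root groups.

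Fix an abelian subgroup $V\leq U$ of finite index. Given any simple root $\al$, Lemma~\ref{noncomm_roots} supplies a simple root $\be$ such that no non-trivial element of $U_{\al}$ commutes with any non-trivial element of $U_{\be}$. Since $V$ is abelian, this forces $U_{\al}\cap V=\{1\}$ or $U_{\be}\cap V=\{1\}$. Because $[U:V]<\infty$ implies $[U_{\gamma}:U_{\gamma}\cap V]<\infty$ for every $\gamma$, whichever of these two intersections is trivial corresponds to a root group that is itself finite. Thus at least one root group attached to a simple root is finite.

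At this point, exactly as in the proof of Theorem~\ref{no_unip_thm}, the classification of Moufang polygons \cite{tits_weiss02} applies: finiteness of one root group in a rank-2 Moufang residue forces every root group of that residue to be finite, and since the Coxeter diagram has no isolated nodes, this propagates along adjacent simple roots and then, by $W$-conjugacy of root groups within each orbit, to every $U_{\al}$ with $\al\in\Phi$. Using the standard bijective product decomposition $\prod_{\al\in\Phi_+}U_{\al}\to U_+$ (for any fixed ordering of $\Phi_+$), together with the finiteness of $\Phi_+$, we conclude that $U=U_+$ is finite.

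The main obstacle, such as it is, is the bookkeeping in propagating finiteness of a single root group to every root group along the Coxeter diagram; this requires no new idea beyond those already developed for Theorem~\ref{no_unip_thm}, and everything else is direct from the hypotheses together with Lemma~\ref{noncomm_roots}.
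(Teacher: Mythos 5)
Your proposal is correct and is essentially the paper's own proof: the paper proves Proposition~\ref{v_abln_implies_triv} by observing that the argument for Theorem~\ref{no_unip_thm} (Lemma~\ref{noncomm_roots} applied to an abelian finite-index $V\leq U$, then the classification of Moufang polygons to make all root groups finite) uses only virtual abelianness of $U$, which is exactly what you extracted. The only cosmetic difference is the endgame: you conclude finiteness of $U$ directly from the bijective product decomposition of $U_+$ into the finitely many finite positive root groups, while the paper passes through local finiteness of the building, and both are immediate at that point.
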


It seems likely to us that this last proposition can be applied to other reductive anisotropic groups as well, even perhaps ones with unipotent elements, in which case we would conclude, applying Theorem~\ref{refine_capr_marq} again, that these $BN$-pairs are even trivial.

\section{Groups consisting of semisimple elements and split BN-pairs of any rank}\label{sec:any_rank}

In this section, we prove Theorem~\ref{all_ss_thm}. It is based on a more general criterion (see Proposition~\ref{T_v_triv_implies_triv}) that might even be applicable in more generality.

\begin{proposition}\label{B_not_v_nilp}
Let $G$ be a nonabelian reductive group over an algebraically closed field $K$, $H$ a Zariski-dense subgroup of $G$ and $A\leq H$ such that $|A\backslash H/A|<\infty$. Then $A$ is not virtually nilpotent.
\end{proposition}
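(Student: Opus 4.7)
The plan is to argue by contradiction: suppose $A$ is virtually nilpotent, and show that $G$ must be abelian. First, pass to a nilpotent finite-index subgroup $A_0\leq A$; if $m=[A:A_0]$, then each $(A\times A)$-orbit on $H$ decomposes into at most $m^2$ orbits of the subgroup $A_0\times A_0$, so $|A_0\backslash H/A_0|\leq m^2\cdot|A\backslash H/A|<\infty$. Replacing $A$ by $A_0$, I may assume $A$ itself is nilpotent. Then the Zariski closure $\overline{A}$ is nilpotent by \cite{borel91}*{Section~2.1}, and its identity component $\overline{A}^{0}$ has finite index in $\overline{A}$; fix coset representatives $c_1,\dots,c_r$ for $\overline{A}^{0}$ in $\overline{A}$.

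Next I would exploit the finite decomposition $H=\bigcup_{i=1}^n Ah_iA$ to locate a ``big'' product. Taking Zariski closures gives
$$G \;=\; \overline{H} \;=\; \bigcup_i\overline{Ah_iA} \;=\; \bigcup_{i,j,k}\overline{\overline{A}^{0}(c_jh_ic_k)\overline{A}^{0}},$$
a finite union of irreducible closed subsets of $G$, each being the closure of the image of the irreducible variety $\overline{A}^{0}\times\overline{A}^{0}$ under a morphism. Since $G$ is irreducible as a variety (being connected), one such closed set must equal $G$; hence there exists $g\in G$ with $\overline{A}^{0}g\overline{A}^{0}$ Zariski-dense in $G$.

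Now I would pass to the semisimple quotient $\pi\colon G\to\bar G:=G/Z(G)^{0}$, which is nontrivial semisimple precisely because $G$ is nonabelian reductive. Setting $\bar A:=\pi(\overline{A}^{0})$ (a closed connected nilpotent subgroup of $\bar G$), the density survives under the surjective morphism $\pi$, so $\bar A\,\pi(g)\,\bar A$ is dense in $\bar G$. The standard fiber-dimension formula applied to the morphism $\bar A\times\bar A\to\bar G$, $(x,y)\mapsto x\pi(g)y$, then yields $2\dim\bar A\geq\dim\bar G=r+2|\Phi_+|$, where $r\geq1$ is the rank of $\bar G$ and $\Phi_+$ is a choice of positive roots.

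The main obstacle is the key structural estimate $\dim\bar A\leq|\Phi_+|$ for any connected nilpotent closed subgroup $\bar A$ of a semisimple group $\bar G$. I would prove this by applying \cite{borel91}*{Theorem~10.6(3)} to write $\bar A=\bar A_s\times\bar A_u$ with $\bar A_s$ a subtorus (WLOG of a fixed maximal torus $T$) and $\bar A_u$ a connected unipotent subgroup of the unipotent radical $U$ of a suitable Borel; since $\bar A_s$ centralizes $\bar A_u$, we get $\bar A_u\subseteq C_U(\bar A_s)=\prod_{\al\in\Phi_+,\,\al|_{\bar A_s}=0}U_\al$. The root-theoretic input is that the nonzero restrictions of $\Phi$ to $\bar A_s$ span $X^{\ast}(\bar A_s)_\Q$ (because $\Phi$ spans $X^{\ast}(T)_\Q$ for semisimple $\bar G$), so by $\pm$-symmetry of $\Phi$ at least $2\dim\bar A_s$ roots restrict nontrivially; this gives $|\Phi_+|-|\Phi_+(\bar A_s)|\geq\dim\bar A_s$ and hence $\dim\bar A\leq\dim\bar A_s+|\Phi_+(\bar A_s)|\leq|\Phi_+|$. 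Combining with $2\dim\bar A\geq r+2|\Phi_+|$ forces $r\leq0$, contradicting $r\geq1$ and completing the argument.
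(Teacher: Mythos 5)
Your argument is correct and follows essentially the same strategy as the paper's proof: reduce to showing that a double coset $MgM$ of a closed connected nilpotent subgroup $M$ cannot be Zariski-dense, decompose $M=M_s\times M_u$, observe that $M_u$ lies in the product of the root groups on which the torus $M_s$ acts trivially, and count roots restricting nontrivially to $M_s$ to get $\dim M\leq|\Phi_+|$, which contradicts density by comparing dimensions. The only notable (and harmless) difference is that you treat the reductive case by passing to the semisimple quotient $G/Z(G)^{0}$, while the paper instead uses the internal decomposition $G=Z(G)^{0}[G,G]$ and runs the dimension count inside $G$ itself.
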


We will not individually cite every result quoted in the proof, but all the details can be found in Chapters~13 and 14 of \cite{borel91}, unless otherwise cited.

\begin{proof}

If $|A\backslash H/A|<\infty$ and $\overline{H} = G$ is connected, then there exists a double coset $AhA$ ($h \in H$) that is Zariski-dense in $G$.
Now if $A$ were virtually nilpotent, also its Zariski closure $\overline{A}$ would be, and there would exist a closed connected nilpotent subgroup $M$ of $\overline{A}$ of finite index.
So $\overline{A}h\overline{A}$ would be a finite union of double cosets modulo $M$, and one of these would have to be dense in $G$. Hence our claim will follow if we can show that $MgM$ is not
dense in $G$, for any closed connected nilpotent subgroup $M$ of $G$ and any $g \in G$. For this it suffices to show $\dim{MgM} < \dim{G}$ (where we set $\dim{MgM} = \dim{\overline{MgM}}$),
and this is what we are going to do now.

First consider the case when $G$ is \emph{semisimple}. Choose a Borel subgroup $B$ of $G$ that contains $M$, and denote by $U$ the unipotent radical of $B$. As a closed connected nilpotent group,
$M$ is the direct product of a torus $T'$ and a unipotent subgroup $U'$, see \cite{borel91}*{Theorem~10.6}. Necessarily, $U'$ is a subgroup of $U$. Choose a maximal torus $T$ of $B$ that
contains $T'$. Let $\Phi$ be the root system associated to $G$ and $T$. Then $B$ determines a positive system $\Phi_+$ in $\Phi$. Recall that
$\dim{U} = |\Phi_+|$ and $\dim{G} = \dim{T} + |\Phi|$.

We want to show that $\dim{M} \leq \dim{U}$, which is clear if $T' = \{1\}$. The simple idea of the following argument is to deduce $\dim{U'} \leq \dim{U} - \dim{T'}$ from the fact that the
product $T' \times U'$ is direct. Order the positive roots $\Phi_+=\{\al_1,\dots,\al_m\}$, and for each $1\leq i\leq m$ denote the corresponding root group by $U_i:=U_{\al_i}$. Then the map $U_1\times\cdots\times U_m\rightarrow U$ given by multiplication is a bijection \cite{borel91}*{Proposition~14.4}. Next choose isomorphisms $x_i:(K,+)\rightarrow U_i$ for all $i$.
It follows (see \cite{borel91}*{Section~10.10}) that $tx_i(\la)t\I=x_i(\al_i(t)\la)$ for $t\in T$, $\la\in K$. Then for any $u \in U'$, there exist uniquely determined
$\la_i\in K$ such that $\dis u=\prod_{i=1}^m x_i(\la_i)$, and $tut\I = \prod_{i=1}^m x_i(\al_i(t)\la_i)$ for all $t \in T$. Since $tut\I=u$ for all $t \in T'$, we obtain

$$\prod_{i=1}^m x_i(\al_i(t)\la_i)=\prod_{i=1}^m x_i(\la_i) \quad \mbox{for }  t \in T'.$$
By uniqueness, for each $i$ such that $\la_i\neq0$ we must have $T'\leq\ker\al_i$. In particular we get that
\begin{align}\label{bound_on_U'}
U'\leq\prod_{i~:~T'\leq\ker\al_i}U_i.
\end{align}
Now consider the character groups $X(T)$ and $X(T')$. Let $\pi:X(T)\twoheadrightarrow X(T')$ be the restriction map, so $\ker\pi=\{\chi\in X(T)\mid T'\leq\ker\chi\}$. Also,
$$\dim{T}=\rk(X(T))=\rk(\ker\pi)+\rk(X(T'))=\rk(\ker\pi)+\dim{T'}$$
so $\rk(\ker\pi)=\dim{T}-\dim{T'}$.

Now let $\ell:=\dim{T}=\rk(X(T))$. By \cite{borel91}*{Theorem~13.18(3)} $\gen\Phi\by_{\Z}$ has rank $\ell$, so without loss of generality the first $\ell$ positive roots $\al_1,\dots,\al_{\ell}$
can be chosen to be linearly independent.
Since $\rk(\ker\pi)=\dim{T}-\dim{T'}$, only $\ell-\dim{T'}$ of the roots $\al_1,\dots,\al_{\ell}$ can be contained in $\ker\pi$. In particular at least $\dim{T'}$ of them are \emph{not} contained in $\ker\pi$. By \ref{bound_on_U'} we conclude that the dimension of $U'$ is less than or equal to $\dim{U}-\dim{T'}$, so indeed $\dim M = \dim{T'} + \dim{U'} \leq\dim U$.
Since $T$ is non-trivial (if $B = U$, then also $G = B = U$ by \cite{borel91}*{Corollary~11.5}, which contradicts our assumptions), it follows as claimed that
$$\dim{MgM} \leq 2\dim{M} \leq 2\dim{U} < \dim{T} + 2\dim{U} = \dim{G}.$$

We now consider the case when $G$ is \emph{reductive}. By \cite{borel91}*{Proposition~14.2}, $G$ decomposes as $G=SG_1$ with $S:=Z(G)^0$ and semisimple $G_1:=[G,G]$. Note that $G_1$ has the same root system $\Phi$ as $G$. Since $G$ is not abelian,  $G_1$ is non-trivial and hence not nilpotent, since semisimple. Choose a maximal torus $T$ of $G$; it has to contain $S$ since $ST$ is a torus. If we had $S = T$, then this were the unique maximal torus of $G$, implying (again by \cite{borel91}*{Corollary~11.5}) that $G$ is nilpotent, a contradiction.
Now let $M$ be a closed connected nilpotent subgroup of $G$, which without loss of generality contains $S$.
Obviously $M=S(M\cap G_1)$, and since $G_1$ is semisimple, the previous case implies $\dim(M\cap G_1)\leq|\Phi_+|$. Thus
$$\dim(MgM)=\dim((M\cap G_1)gM)\leq \dim(M\cap G_1)+\dim{M}\leq 2|\Phi_+|+\dim{S}.$$
Also, $G$ has dimension $2|\Phi_+|+\dim{T}>2|\Phi_+|+\dim{S}$, so we conclude that $MgM$ is not dense in $G$.
\end{proof}

\begin{proposition}\label{T_v_triv_implies_triv}
Let $G$ and $H$ be as in Proposition~\ref{B_not_v_nilp}. Let $(B=U\rtimes T,N)$ be a split spherical $BN$-pair of $H$, and suppose that the kernel of the action of $H$ on the building $\Delta=\Delta(H,B)$ has finite index in $T$. Then $U$ and $\Delta$ are trivial.
\end{proposition}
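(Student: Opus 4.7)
Let $K$ denote the kernel of the action of $H$ on $\Delta$, so by hypothesis $K\leq T$ with $[T:K]<\infty$. The subgroups $U$ and $K$ are both normal in $B$ with $U\cap K\leq U\cap T=\{1\}$, so $[U,K]\leq U\cap K=\{1\}$; hence $K$ centralizes $U$ and $UK$ is an internal direct product $U\times K$ of finite index $[T:K]$ in $B$. Spherical Bruhat gives $|B\backslash H/B|=|W|<\infty$, whence $|UK\backslash H/UK|\leq[T:K]^2|W|<\infty$. Applying Proposition~\ref{B_not_v_nilp} with $A=UK$ then shows that $UK$ is not virtually nilpotent; since $U$ is nilpotent, neither is $K$.

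Passing to Zariski closures, $K\trianglelefteq H$ dense yields $\overline{K}\trianglelefteq G$ and $[\overline{K},\overline{U}]\leq\overline{[K,U]}=\{1\}$. The identity component $\overline{K}^0$ is a connected normal subgroup of the reductive group $G$, hence itself reductive, and by structure theory decomposes as an almost direct product $S\cdot M$ with $S$ a subtorus of $Z(G)^0$ and $M$ a product of some of the simple factors of $[G,G]$. Such a connected normal reductive subgroup is nilpotent exactly when it is a torus, i.e., precisely when $M=\{1\}$; combined with the previous paragraph, this forces $M\neq\{1\}$. The crux of the proof is to upgrade this to $M=[G,G]$.

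Suppose instead that some simple factor $G_i$ of $[G,G]$ is missing from $M$. Then $\overline{K}^0\leq C_G(G_i)^0$ (since each simple factor in $M$ commutes with $G_i$ and $S\leq Z(G)^0$), and the projection $\pi\colon G\twoheadrightarrow\tilde G:=G/C_G(G_i)^0$ lands in an almost-simple nonabelian reductive group (isogenous to $G_i/Z(G_i)$) in which $\pi(H)$ is Zariski-dense. Because $\overline{K}^0\subseteq\ker\pi$, the image $\pi(\overline{K})$ is finite, and hence so are $\pi(K)$ and (by $[T:K]<\infty$) $\pi(T)$. Thus $\pi(B)=\pi(U)\pi(T)$ contains the nilpotent $\pi(U)$ as a normal subgroup of finite index, so $\pi(B)$ is virtually nilpotent. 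On the other hand, $\pi$ induces a surjection $B\backslash H/B\twoheadrightarrow\pi(B)\backslash\pi(H)/\pi(B)$, so the latter has at most $|W|<\infty$ elements; Proposition~\ref{B_not_v_nilp} applied inside $\tilde G$ with $A=\pi(B)$ then forces $\pi(B)$ not to be virtually nilpotent, a contradiction. Therefore $M=[G,G]$, so $\overline{B}\supseteq\overline{K}^0\supseteq[G,G]$ and $\overline{B}\cdot Z(G)\supseteq[G,G]\cdot Z(G)=G$. The hypothesis of Lemma~\ref{key_lemma} is thereby verified, and that lemma produces both $U=\{1\}$ and $\Delta$ trivial. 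The main obstacle I anticipate is the projection argument itself: one must correctly set up $\tilde G$ as a nonabelian reductive quotient and carefully track how the finiteness of $[T:K]$ transports through $\pi$ to make $\pi(B)$ virtually nilpotent.
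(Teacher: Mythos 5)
Your argument is correct, and it rests on the same two pillars as the paper's proof — Proposition~\ref{B_not_v_nilp} and Lemma~\ref{key_lemma}, with the whole point being to verify $G=\overline{B}Z(G)$ — but the middle is executed differently. The paper simply passes to the quotient $\pi\colon G\twoheadrightarrow G/\overline{Q}$ by the closure of the kernel $Q$: this quotient is reductive (Borel, Cor.~14.11), $\pi(H)$ is dense, $\pi(B)$ has finitely many double cosets and is virtually nilpotent because $B/Q\cong U\rtimes(T/Q)$ with $T/Q$ finite, so Proposition~\ref{B_not_v_nilp} forces $G/\overline{Q}$ to be abelian, i.e.\ $[G,G]\leq\overline{Q}\leq\overline{B}$, and Lemma~\ref{key_lemma} finishes. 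You instead invoke the classification of connected normal subgroups of a reductive group ($\overline{K}^0=S\cdot M$ with $S\leq Z(G)^0$ and $M$ a product of simple factors — a standard fact, but one that needs rigidity of tori and the structure of normal semisimple subgroups, which you should cite or prove) and then run a per-simple-factor contradiction through the quotients $G/C_G(G_i)^0$, applying Proposition~\ref{B_not_v_nilp} in each almost-simple image; this trades the paper's one appeal to ``quotients of reductive groups are reductive'' for heavier structure theory, but is equally valid. Two small remarks: your opening step with $A=UK$ (showing $K$ is not virtually nilpotent, hence $M\neq\{1\}$) is actually superfluous, since your contradiction argument already covers the case $M=\{1\}$ because $G$ is nonabelian; and Lemma~\ref{key_lemma} as stated concludes only that $U$ is trivial, but its proof (and the paper's own use of it) gives triviality of $\Delta$ as well, so your final invocation matches the paper's.
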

\begin{proof}
Denote by $Q$ the kernel of the action of $H$ on $\D$, so $[T:Q]<\infty$. Since $Q$ is normal in $H$, if we pass to the Zariski closure we get that $\overline{Q}$ is normal in $\overline{H}=G$.
Consider the canonical projection $\pi : G \twoheadrightarrow G/\overline{Q}$. It follows from \cite{borel91}*{Corollary~14.11} that $G/\overline{Q} = \pi(G)$ is reductive. Also $\pi(H)$ is
dense in $G/\overline{Q}$ since $G/\overline{Q} = \pi(\overline{H}) \subseteq \overline{\pi(H)}$. Since $(B,N)$ is a spherical $BN$-pair in $H$, $|B\backslash H/B|<\infty$, which implies
$|\pi(B)\backslash \pi(H)/\pi(B)|<\infty$. Next we note that $B/Q$ is virtually nilpotent since it is a semidirect product of $U$ and $T/Q$, and $|T/Q| < \infty$ by assumption. But then also
$\pi(B)$, which is isomorphic to $B/(B \cap \overline{Q})$ and hence to a quotient of $B/Q$, is virtually nilpotent. Combining all these facts, Proposition~\ref{B_not_v_nilp} now implies that
$G/\overline{Q}$ is \emph{abelian}. As above we have a decomposition $G=[G,G]Z(G)$ where $[G,G]$ is the commutator subgroup. Since $G/\overline{Q}$ is abelian, clearly $[G,G]\leq\overline{Q}$, so $G=\overline{Q}Z(G)$. Since $\overline{Q}\leq\overline{B}$, we can now apply Lemma~\ref{key_lemma} to conclude that $U$ and $\D$ are trivial.
\end{proof}

\begin{lemma}\label{rigidity_lemma}
Let $G$ be any linear algebraic group. Let $U$ be a nilpotent subgroup of $G$ consisting of semisimple elements, and $T$ any subgroup of $N_G(U)$. Then $[T:C_T(U)]<\infty$.
\end{lemma}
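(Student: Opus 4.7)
The plan is to study the conjugation action of $T$ on the Zariski closure $\overline{U}$ of $U$ in $G$. Since $T$ normalizes $\overline{U}$ and $C_T(U)=C_T(\overline{U})$ (centralizers depend only on the closure of the set being centralized), it suffices to show $T/C_T(\overline{U})$ is finite. My first task is to establish the structure of $\overline{U}$. Being the Zariski closure of a nilpotent subgroup, $\overline{U}$ is nilpotent, so by \cite{borel91}*{Theorem~10.6} its identity component decomposes as $\overline{U}^0=S\times U_0$ with $S$ a torus and $U_0$ unipotent. The subgroup $V:=U\cap\overline{U}^0$ has finite index in $U$ and is Zariski dense in the connected group $\overline{U}^0$; since $V$ consists of semisimple elements, its projection to $U_0$ is trivial, forcing $U_0=\{1\}$ by density. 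Hence $\overline{U}^0=S$ is a torus.

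Next I would show that $S$ is central in $\overline{U}$. The induced conjugation action $F:=\overline{U}/S\to\Aut(S)$ lands in a discrete group, and for $f\in F$ acting as $\sigma_f$, the nilpotency of $\overline{U}$ forces $(\sigma_f-1)^k=0$ on the character lattice $X(S)$ for large $k$, so $\sigma_f$ is unipotent. Combined with the finite order of $\sigma_f$ in $\GL(X(S))$, this gives $\sigma_f=1$, so $\overline{U}$ is a central extension $1\to S\to\overline{U}\to F\to 1$ with $F$ finite and $S$ a central torus.

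The conclusion proceeds as follows. By rigidity of tori, $T_S:=T\cap C_G(S)$ has finite index in $T$; since $T_S$ acts on the finite group $F$ by conjugation, its kernel $T_1$ still has finite index in $T$. For $t\in T_1$ and $u\in\overline{U}$, triviality of the induced action on $F$ gives $tut\I\in uS$, so $\sigma_t(u):=u\I tut\I$ defines a map $\overline{U}\to S$. Using centrality of $S$ in $\overline{U}$ together with $t\in C_G(S)$, a short calculation shows $\sigma_t$ is a group homomorphism that vanishes on $S$, hence descends to some $\bar\sigma_t\in\operatorname{Hom}(F,S)$, and furthermore that $t\mapsto\bar\sigma_t$ is itself a homomorphism $T_1\to\operatorname{Hom}(F,S)$ with kernel $C_T(U)\cap T_1$. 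Because $F$ is finite, $\operatorname{Hom}(F,S)=\operatorname{Hom}(F,S[|F|])$ is finite, and combining the three finiteness statements yields $[T:C_T(U)]<\infty$.

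The main obstacle I anticipate is the centrality of $S$ in $\overline{U}$, which crucially combines the nilpotency of $\overline{U}$ with the discreteness of $\Aut(S)$; once this is in hand, the reductions via rigidity of tori and finiteness of $F$ are mechanical.
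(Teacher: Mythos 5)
Your argument is correct; the steps you left as ``short calculations'' do go through: centrality of $S$ in $\overline{U}$ is exactly what makes $\sigma_t$ multiplicative, $t\in C_G(S)$ is what makes $t\mapsto\bar\sigma_t$ multiplicative, and the kernel is $T_1\cap C_T(U)$ because $C_T(\overline{U})=C_T(U)$; likewise your density argument that $\overline{U}^0$ is a torus and the unipotent-plus-finite-order argument in $\GL(X(S))$ are sound. Your route is, however, organized differently from the paper's. Both proofs rest on the same two pillars --- semisimplicity of $U$ forces the relevant connected piece to be a central torus, and rigidity of diagonalizable groups supplies the finiteness --- but they diverge in how the disconnectedness of $\overline{U}$ is handled. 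The paper never leaves the abstract group $U$: it quotes \cite{borel91}*{Proposition~12.5} for the centrality of $(\overline{U}^0)_s$ in $\overline{U}$, deduces $[U:Z(U)]<\infty$, covers $U$ by the finitely many abelian (hence diagonalizable) subgroups $V_i=\langle u_i,Z(U)\rangle$ indexed by coset representatives of $Z(U)$, applies rigidity \cite{borel91}*{Corollary~8.10(2)} to each $V_i$ separately, and intersects the resulting finite-index centralizers. You instead determine the structure of the closure --- $\overline{U}$ is a central extension of the finite component group $F$ by the torus $S=\overline{U}^0$, with the centrality of $S$ reproved by hand rather than cited --- then apply rigidity just once, to $S$, and absorb $F$ through the finite group $\operatorname{Hom}(F,S)$. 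The paper's version is shorter and requires no analysis of $\overline{U}$ beyond the two citations; yours is more self-contained, invokes rigidity only for a single connected torus, and yields an explicit bound on $[T:C_T(U)]$ in terms of $[N_G(S):C_G(S)]$, $|\Aut(F)|$ and $|\operatorname{Hom}(F,S)|$, at the cost of the extra cocycle-style bookkeeping.
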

\begin{proof}
We first claim that $[U:Z(U)]<\infty$. Of course by Lemma~\ref{nilp_implies_vabln} we already know that $U$ is virtually abelian, but in the present context we can do even better. Indeed, the Zariski closure $\overline{U}$ is nilpotent, and so by \cite{borel91}*{Proposition~12.5} the semisimple part of the connected component $(\overline{U}^0)_s$ is central in $\overline{U}$. In particular $U\cap(\overline{U}^0)_s$ is central in $U$. But since $U$ consists of semisimple elements, $U\cap(\overline{U}^0)_s=U\cap\overline{U}^0$. This has finite index in $U$ and so indeed $[U:Z(U)]<\infty$.

Let $\{u_1,\dots,u_n\}$ be a set of coset representatives of $U/Z(U)$. For any $1\leq i\leq n$, the group $V_i:=\gen u_i,Z(U)\by$ is abelian and consists of semisimple elements, i.e. is diagonalizable. In particular by rigidity \cite{borel91}*{Corollary~8.10(2)}, we see that $[T:C_T(V_i)]<\infty$. This implies that the group $\dis\bigcap_{i=1}^nC_T(V_i)$ has finite index in $T$. But if $t\in T$ centralizes every $V_i$ then of course $t$ centralizes $U$, so we conclude that indeed $[T:C_T(U)]<\infty$.
\end{proof}

We are now in a position to prove Theorem~\ref{all_ss_thm}. From now on $G$ is a reductive group over an algebraically closed field, and $H$ be a subgroup that is Zariski-dense in $G$ and consists only of semisimple elements.

\begin{proof}[Proof of Theorem~\ref{all_ss_thm}]
Let $(B=U\rtimes T,N)$ be a split spherical $BN$-pair of $H$, and let $\D=\D(H,B)$. We want to show that $\D$ is trivial. Let $\So$ be the fundamental apartment in $\D$, with fundamental chamber $C$. Let $D$ be the chamber opposite $C$ in $\So$, so any other chamber opposite $C$ in $\D$ is of the form $uD$ for some $u\in U$ (since $H$ acts strongly transitively on $\D$ and $B = UT$).
Now, if $t\in T$ commutes with every element of $U$ then clearly $t(uD)=utD=uD$. Thus the centralizer $C_T(U)$ of $U$ in $T$ is contained in the kernel of the action. Since $U$ is nilpotent and, by virtue of being contained in $H$, consists of semisimple elements, by Lemma~\ref{rigidity_lemma} in fact $[T:C_T(U)]<\infty$. In particular $T$ acts virtually trivially on $\D$, and so by Proposition~\ref{T_v_triv_implies_triv} $\D$ is trivial.
\end{proof}

\begin{corollary}\label{ss_cor}
Let $k$ be an infinite field. Let $G$ be reductive $k$-group such $G(k)$ consists of semisimple elements. Then $G(k)$ admits no non-trivial split spherical $BN$-pairs.\qed
\end{corollary}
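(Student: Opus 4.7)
The plan is to reduce this corollary directly to Theorem~\ref{all_ss_thm} by a base-change argument. First I would pass from the $k$-group $G$ to the reductive algebraic group $\widetilde{G} := G \times_k \overline{k}$ over an algebraic closure $\overline{k}$ of $k$. The inclusion $G(k) \hookrightarrow \widetilde{G}(\overline{k})$ realizes $H := G(k)$ as a subgroup of the $\overline{k}$-points of a reductive group over an algebraically closed field, which is the setting required by Theorem~\ref{all_ss_thm}.

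Next I would verify the two hypotheses of Theorem~\ref{all_ss_thm}. The Zariski-density of $G(k)$ in $\widetilde{G}$ is precisely \cite{borel91}*{Corollary~18.3}, invoked already in the introduction; this is the only place where the assumption that $k$ is infinite enters. That $H$ consists of semisimple elements is given as a hypothesis of the corollary (and semisimplicity of an element of $G(k)$ is preserved when the element is viewed inside $\widetilde{G}$, since the Jordan decomposition is compatible with base change). With both hypotheses satisfied, Theorem~\ref{all_ss_thm} yields at once that every split spherical $BN$-pair of $G(k)$ is trivial.

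There is no real obstacle here, which accounts for the \qed at the end of the corollary statement in the excerpt. Theorem~\ref{all_ss_thm} was phrased in terms of Zariski-dense subgroups of reductive groups over algebraically closed fields precisely so that specializations like this one amount to little more than unpacking the definitions, with the role of the hypothesis ``$k$ infinite'' isolated in the single appeal to Borel's density result.
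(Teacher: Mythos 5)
Your argument is exactly the one the paper intends: view $G(k)$ inside $G$ over the algebraic closure, use \cite{borel91}*{Corollary~18.3} for Zariski-density (this is where ``$k$ infinite'' enters, as noted in the introduction), and apply Theorem~\ref{all_ss_thm}. The paper leaves this as immediate (hence the \qed), and your additional remarks on base change and Jordan decomposition are correct bookkeeping, not a different route.
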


In particular Conjecture~\ref{conj1} holds in case $G(k)$ consists of semisimple elements. If $G$ is $k$-anisotropic and $k$ is perfect then this is the case, by \cite{borel91}*{Proposition~4.2(5)} and the fact discussed earlier that $G(k)$ has no non-trivial unipotent elements. We thus have a second, different proof that Conjecture~\ref{conj1} holds when $k$ is perfect.

\section{Division algebras, and some conclusions}\label{sec:div_algs_and_conclusions}

As promised, in case $H=G(k)$ is the multiplicative or norm-1 group of a finite dimensional $k$-division algebra, we can also eliminate the rank-1 case. By the proof of Theorem~\ref{all_ss_thm} it suffices to show that if $(B=U\rtimes T,N)$ is any split spherical $BN$-pair of $H$, then $[T:C_T(U)]<\infty$. We prove this in the following

\begin{lemma}\label{div_alg_lemma}
Let $H$ be the multiplicative or norm-1 group of a finite dimensional $k$-division algebra $D$. Let $U\leq H$ be nilpotent and $T\leq N_H(U)$. Then $[T:C_T(U)]<\infty$.
\end{lemma}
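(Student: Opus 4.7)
The plan is to mimic the proof of Lemma~\ref{rigidity_lemma}, substituting the ``rigidity of diagonalizable subgroups'' step with the observation that commutative finite-dimensional subalgebras of the division algebra $D$ are subfields, each with a finite group of $k$-algebra automorphisms. First I would establish $[U:Z(U)]<\infty$; then for each coset representative $u_i$ of $U/Z(U)$, the subgroup $V_i:=\gen u_i,Z(U)\by$ is abelian, so $L_i:=k[V_i]$ is a subfield of $D$. Since $T$ normalizes $U$, it permutes the finite set $\{V_1,\dots,V_m\}$ by conjugation, so the stabilizer $T_0\leq T$ of this partition has index at most $m!$ and normalizes each $L_i$; hence each map $T_0\to\mathrm{Aut}_k(L_i)$ has finite image, with kernel $C_{T_0}(L_i)\leq C_{T_0}(V_i)$. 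Intersecting over the finitely many $i$ gives $\bigcap_iC_{T_0}(V_i)=C_{T_0}(U)$ of finite index in $T_0$, and thus $[T:C_T(U)]<\infty$ exactly as in the conclusion of the proof of Lemma~\ref{rigidity_lemma}.

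The crux is therefore the first step, $[U:Z(U)]<\infty$. Because $H$ contains no non-trivial unipotent elements (as noted after Theorem~\ref{no_unip_thm}), Lemma~\ref{nilp_implies_vabln} implies that $U$ is virtually abelian. Embedding $D\hookrightarrow M_n(\overline{k})$ via a splitting $D\otimes_k\overline{k}\cong M_n(\overline{k})$ and setting $U_0:=U\cap\overline{U}^0$, we obtain an abelian normal subgroup of finite index in $U$. The subring $M:=k[U_0]\leq D$ is then a subfield, being a commutative finite-dimensional $k$-subring of a division algebra. Conjugation by $U$ preserves $M$, and since $\mathrm{Aut}_k(M)$ is finite, the kernel $U^{(1)}:=C_U(M)$ has finite index in $U$. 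Because $U^{(1)}$ centralizes $M\supseteq U_0$, we get $U_0\leq Z(U^{(1)})$, whence $[U^{(1)}:Z(U^{(1)})]\leq[U^{(1)}:U_0]<\infty$.

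To complete Step~1, I would iterate by setting $M_j:=k[Z(U^{(j)})]$ and $U^{(j+1)}:=C_U(M_j)$, obtaining an ascending chain of subfields of $D$ that must stabilize by finite-dimensionality of $D$; the stable stage gives a finite-index normal subgroup $V\leq U$ and a subfield $L\supseteq k$ with $V=C_U(L)$, $L=k[Z(V)]$, and $[V:Z(V)]<\infty$. The remaining and main technical obstacle is to promote this to $[U:Z(U)]<\infty$. The finite quotient $U/V$ embeds into $\mathrm{Aut}_k(L)$ and acts faithfully on $Z(V)$ (since $Z(V)$ generates $L$ as a $k$-algebra), so every $U$-orbit on $Z(V)$ has size dividing $|U/V|<\infty$. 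Combining this with the nilpotency of $U$---for instance, by analyzing the iterated commutators $[u,z]\in[U,V]\leq V$ for $u\in U$ and $z\in Z(V)$ along the upper central series, or by applying a norm-type averaging $z\mapsto\prod_{\phi\in U/V}\phi(z)\in Z(V)\cap Z(U)$---one should deduce that $Z(V)\cap Z(U)=Z(V)^{U/V}$ has finite index in $Z(V)$, and hence $[U:Z(U)]<\infty$, completing the proof.
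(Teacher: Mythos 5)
The second half of your argument --- getting $[T:C_T(U)]<\infty$ from $[U:Z(U)]<\infty$ by covering $U$ with the abelian groups $V_i=\gen u_i,Z(U)\by$, passing to the subfields $k[V_i]$, and using finiteness of $\Aut(k[V_i]|k)$ --- is essentially the paper's own proof (the paper takes the kernel of $T\to\Aut(U/Z(U))$ instead of the stabilizer of the set $\{V_i\}$, a cosmetic difference). The genuine gap is exactly at what you yourself flag as the ``main technical obstacle'': promoting $[V:Z(V)]<\infty$, for your finite-index normal subgroup $V=C_U(Z(V))$, to $[U:Z(U)]<\infty$. The ingredients you propose (finiteness of the $U$-orbits on $Z(V)$, nilpotency of $U$, norm-type averaging) do not suffice, even in principle. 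Abstractly: let $p$ be odd, $A=\bigoplus_{i\geq1}\Z/p^2\Z$, and $U=\gen x\by\ltimes A$ with $x$ of order $p$ acting by $a\mapsto a^{1+p}$. Then $U$ is nilpotent of class $2$, $V:=A=C_U(A)$ is abelian of index $p$ with $V=C_U(Z(V))$, every $U$-orbit on $A$ has size dividing $p$, and the norm map $a\mapsto\prod_{\phi}\phi(a)=a^{p}$ does land in the fixed points --- yet $A\cap Z(U)=A[p]$ has infinite index in $A$ and $[U:Z(U)]=\infty$. The norm map only tells you where its image lies; it gives no control on the index of the fixed-point subgroup, and nilpotency alone does not force the fixed points to have finite index.

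So to close this step you would have to exploit the division-algebra structure in an essential way (for instance, that the torsion subgroup of $L^\times$ is locally cyclic, or deeper structural results on nilpotent subgroups of division rings), none of which appears in the sketch. This is precisely where the paper's proof diverges and is much shorter: $k[U]$ is a division subalgebra of $D$, so its left-regular representation $k[U]\hookrightarrow\GL_r(k)$ is irreducible (the invariant subspaces are left ideals), hence $U$ is a nilpotent linear group with a faithful \emph{irreducible} representation, and Suprunenko's Theorem~27 yields $[U:Z(U)]<\infty$ in one stroke. Your preliminary reductions (the abelian subgroup $U_0$, the subfield $M=k[U_0]$, the iterated centralizers stabilizing by finite-dimensionality) are correct as far as they go, but they do not substitute for this input, and as written the proof of the crucial first step is incomplete.
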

\begin{proof}
Clearly $k[U]$ is a division subalgebra of $D$. Then the canonical faithful representation $k[U]\hookrightarrow\GL(k[U])\cong\GL_r(k)$ for $r=\dim_kk[U]$ is irreducible. Hence also the restricted representation $U\hookrightarrow\GL_r(k)$ is faithful and irreducible. By \cite{sup58}*{Theorem~27} we conclude that $[U:Z(U)]<\infty$. Now consider the action of $T$ on $U$ by conjugation. Clearly $Z(U)$ is normalized by this action, so we get a homomorphism $T\rightarrow\Aut(U/Z(U))$. Let $T_0$ denote the kernel of this map, so $T_0$ has finite index in $T$. Choose a transversal $\{u_1,\dots,u_r\}$ of $U/Z(U)$. For any $i$, $T_0$ normalizes the abelian group $V_i=\gen u_i,Z(U)\by$. If $k[V_i]$ denotes the subfield of $D$ spanned by $V_i$, we get a homomorphism $\phi_i:T_0\rightarrow\Aut(k[V_i]|k)$,
the kernel of which has finite index in $T_0$ (since $k[V_i]|k$ is a finite extension). In particular
$$\bigcap_{i=1}^r\ker\phi_i$$
has finite index in $T_0$, and thus in $T$. Of course any element of this subgroup centralizes each $V_i$, and so centralizes $U$. We conclude that indeed $[T:C_T(U)]<\infty$.
\end{proof}

\begin{corollary}\label{div_alg_cor}
If $H$ is the multiplicative or norm-1 group of a division ring, then $H$ admits no non-trivial split spherical $BN$-pairs.\qed
\end{corollary}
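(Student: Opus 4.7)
The plan is to follow the reduction the authors telegraph in the paragraph before Lemma~\ref{div_alg_lemma}: run Corollary~\ref{div_alg_cor} through the machinery developed for Theorem~\ref{all_ss_thm}, with Lemma~\ref{div_alg_lemma} playing the role taken there by Lemma~\ref{rigidity_lemma}. First I would install $H$ as the $k$-rational points of a reductive $k$-group. Writing $D$ for the finite-dimensional division algebra in question and $k$ for its center, the case of commutative $D = k$ makes $H$ abelian, in which case no non-trivial $BN$-pair can exist since every $s \in S$ must satisfy $sBs\I \neq B$. Otherwise $D$ is non-commutative and the algebraic $k$-group $\mathbf{G} = \GL_1(D)$ (resp.\ $\SL_1(D)$) is a non-abelian reductive $k$-form of $\GL_n$ (resp.\ $\SL_n$) for some $n\geq 2$, and $H = \mathbf{G}(k)$ is Zariski-dense in $\mathbf{G}$ over an algebraic closure whenever $k$ is infinite, as invoked in Section~\ref{sec:intro}. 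When $k$ is finite, $D = k$ by Wedderburn, so we fall into the commutative case anyway.

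Given a split spherical $BN$-pair $(B = U \rtimes T, N)$ of $H$, the semidirect decomposition ensures $T \leq N_H(U)$, so Lemma~\ref{div_alg_lemma} applies and yields $[T : C_T(U)] < \infty$. Now I would transplant verbatim the final paragraph of the proof of Theorem~\ref{all_ss_thm}: in $\D = \D(H,B)$ with fundamental apartment $\So$ and fundamental chamber $C$, pick the chamber $E$ opposite $C$ in $\So$. Every chamber of $\D$ opposite $C$ has the form $uE$ for some $u \in U$, and for $t \in C_T(U)$ we compute $t(uE) = u(tE) = uE$ since $T$ fixes $\So$ pointwise. Hence $C_T(U)$ fixes every chamber opposite $C$, and because in a spherical building an apartment through $C$ is determined by its opposite chamber, $C_T(U)$ fixes every apartment through $C$ pointwise; in particular $C_T(U)$ lies in the kernel $Q$ of the action of $H$ on $\D$. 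Thus $[T:Q]<\infty$, and Proposition~\ref{T_v_triv_implies_triv} immediately gives that $U$ and $\D$ are trivial.

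The main obstacle is entirely packaged inside Lemma~\ref{div_alg_lemma}, which has already been proved above; beyond that the argument is a straight assembly of pieces already in hand. The only routine check left is the algebraic setup --- namely that $D^{\times}$ and $\SL_1(D)$ genuinely arise as $k$-points of reductive $k$-groups that are Zariski dense in their algebraic hulls over $\bar k$ --- and this is standard and already implicit in the formalism of Section~\ref{sec:intro}.
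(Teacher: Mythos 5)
Your proposal is correct and is essentially the paper's own argument: the paper proves Corollary~\ref{div_alg_cor} precisely by noting that, by the proof of Theorem~\ref{all_ss_thm}, it suffices to have $[T:C_T(U)]<\infty$, which Lemma~\ref{div_alg_lemma} supplies in place of Lemma~\ref{rigidity_lemma}, and then Proposition~\ref{T_v_triv_implies_triv} finishes. Your additional checks (Zariski density of $D^\times$ and $\SL_1(D)$ in the corresponding reductive group, and the commutative/finite cases via Wedderburn) are routine points the paper leaves implicit.
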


To summarize, at this point Conjecture~\ref{conj1} stands completely proved in case $k$ is local or perfect, or if $G(k)$ consists of semisimple elements, or if $G(k)$ is the multiplicative or norm-1 group of a finite dimensional $k$-division algebra. Also the conjecture stands proved modulo rank-1 $BN$-pairs if $G(k)$ contains no non-trivial unipotent elements.

Lastly we note that we have two main tools now to demonstrate that a split spherical $BN$-pair $(B=U\rtimes T,N)$ of a reductive anisotropic group is trivial. Namely, if the rank is greater than 1 it suffices to show $U$ is virtually abelian, by Proposition~\ref{v_abln_implies_triv}, and if the rank is arbitrary it suffices to show that $[T:C_T(U)]<\infty$, by Proposition~\ref{T_v_triv_implies_triv}. It is our hope that these criteria could prove useful in eventually establishing the full conjecture.

\renewcommand{\baselinestretch}{1}

\begin{bibdiv}
\begin{biblist}

\bib{abr08}{book}{
  author={Abramenko, Peter},
  author={Brown, Kenneth S.},
  title={Buildings: Theory and Applications},
  series={Graduate Texts in Mathematics},
  volume={248},
  publisher={Springer-Verlag},
  address={New York},
  date={2008},
  isbn={978-0-387-78834-0},
}

\bib{borel91}{book}{
  author={Borel, Armand},
  title={Linear Algebraic Groups},
  series={Graduate Texts in Mathematics},
  volume={126},
  publisher={Springer-Verlag},
  address={New York},
  date={1991},
  isbn={978-0-387-97370-8},
}

\bib{bortits65}{article}{
	author={Borel, Armand},
	author={Tits, Jacques},
	title={Groupes r\'eductifs},
	journal={Inst. Hautes \'Etudes Sci. Publ. Math.},
	volume={27},
	pages={55-150},
	date={1965},
}


\bib{caprace09}{article}{
	author={Caprace, Pierre-Emmanuel},
	author={Marquis, Timoth\'ee},
	title={Can an anisotropic reductive group admit a Tits system?},
	eprint={arxiv.org/PS_cache/arxiv/pdf/0908/0908.2577v1.pdf},
	date={2009},
}

\bib{caprace11}{article}{
	author={Caprace, Pierre-Emmanuel},
	author={Marquis, Timoth\'ee},
	title={Can an anisotropic reductive group admit a Tits system?},
	journal={Pure and Appl. Math. Quart.},
	volume={7 Nr. 3},
	pages={539-558},
	date={2011},
}

\bib{demedts05}{article}{
	author={De Medts, Tom},
	author={Haot, Fabienne},
	author={Tent, Katrin},
	author={Van Maldeghem, Hendrik},
	title={Split $BN$-pairs of rank at least 2 and the uniqueness of splittings},
	journal={J. Group Theory},
	volume={8(1)},
	pages={1-10},
	date={2005},
}

\bib{prasad11}{article}{
	author={Prasad, Gopal},
	title={Weakly-split spherical Tits systems in pseudo-reductive groups},
	eprint={arxiv.org/PS_cache/arxiv/pdf/1103/1103.5970v2.pdf},
	date={2011},
}

\bib{rapinchuk02}{article}{
	author={Rapinchuk, Andrei S.},
	author={Segev, Yoav},
	author={Seitz, Gary M.},
	title={Finite quotients of the multiplicative group of a finite dimensional division algebra are solvable},
	journal={J. Amer. Math. Soc.},
	volume={15(4)},
	pages={929-978},
	date={2002},
}

\bib{sup58}{book}{
  author={Suprunenko, Dmitrii},
  title={Soluble and nilpotent linear groups},
  series={Translations of Mathematical Monographs},
  publisher={Amer. Math. Soc.},
  volume={9},
  date={1958},
  isbn={978-0-821-83895-2},
}

\bib{tits_weiss02}{book}{
	author={Tits, Jacques},
	author={Weiss, Richard},
	title={Moufang Polygons},
	date={2002},
	publisher={Springer},
	series={Springer Monographs in Mathematics},
	isbn={978-3-642-07833-0},
}

\end{biblist}
\end{bibdiv}

\end{document}